\theoremstyle{definition}
\newtheorem{theorem}{Theorem}[subsection]
\newtheorem{prop}[theorem]{Proposition}
\newtheorem{lemma}[theorem]{Lemma}
\newtheorem{corollary}[theorem]{Corollary}
\newtheorem{definition}[theorem]{Definition}
\newtheorem{notation}[theorem]{Notation}
\newtheorem{construction}[theorem]{Construction}
\newtheorem{remark}[theorem]{Remark}
\newtheorem{example}[theorem]{Example}
\def\cc{\mathbf C}\def\cd{\mathbf D}
\def\ce{\mathcal E}\def\cf{\mathcal F}
\def\cP{\mathcal P}
\newcommand{\calO}{{\mathcal O}}
\newcommand{\set}{{\mathrm{Set}}}
\newcommand{\Deltab}{{\mathbf \Delta}}
\begin{document}
\title{2-fold complete Segal operads}
\author{Sanath K. Devalapurkar}
\subjclass[2010]{18D05, 18D50, 55P42, 55P43, 55P48, 55U40}
\keywords{Cartesian 2-fibrations, $(\infty,2)$-categories, stable $\infty$-categories, $\infty$-operads}
\maketitle
\begin{abstract}
We define the notion of Cartesian 2-fibrations, and prove a weak analogue of straightening. Using Barwick's notion of operator categories and the notion of a Cartesian 2-fibration, we extend the notion of $\infty$-operads to the $(\infty,2)$-categorical context. These $(\infty,2)$-operads are used to define and study 2-rings, which are analogues of rings (and ring spectra) in the $\infty$-categorical context.
\end{abstract}
\section{Introduction}
Let $\cf_\ast$ denote the category of pointed finite sets ($\cf$ denotes the category of finite sets). Recall that a symmetric monoidal $\infty$-category $\cc^\otimes$ is an $\infty$-operad whose structure map $\cc^\otimes\to\mathrm{N}(\cf_\ast)$ is a coCartesian fibration. This makes sense because of straightening, which says, in this case, that this is equivalent to a map from $\mathrm{N}(\cf_\ast)$ to the $\infty$-category of $\infty$-categories; since $\mathrm{N}(\cf_\ast)$ is equivalent to the $\mathbf{E}_\infty$-operad, this is simply an $\mathbf{E}_\infty$-algebra object in $\mathrm{Cat}_\infty$, i.e., a symmetric monoidal $\infty$-category.

Consequently, in order to define symmetric monoidal $(\infty,2)$-categories, it is essential to define the $(\infty,2)$-categorical version of an $\infty$-operad, i.e., an $(\infty,2)$-operad. There are multiple models of $\infty$-operads, including Moerdijk-Weiss' dendroidal sets, Lurie's $\infty$-operads, and Barwick's complete Segal operads. Since we are generalizing to $(\infty,2)$-category theory through scaled simplicial sets, which generalizes many simplicial constructions, the complete Segal operad model has a straightforward generalization to the notion of a $2$-fold complete Segal operad. We formalize the $(\infty,2)$-categorical analogues $\mathbf{E}^2_k$ of the little cubes operads, and prove that the ordinary $\mathbf{E}_k$-operads are subsimplicial sets of these $(\infty,2)$-operads.

\section{Scaled simplicial sets}
\subsection{Model structure}
\begin{definition}
Let $X$ be a simplicial set. A scaled simplicial set is a pair $(X,\Gamma)$, where $\Gamma$ is a collection of $2$-simplices of $X$ such that the collection $\mathrm{deg}_2(X)$ of degenerate $2$-simplices of $X$ is a subcollection of $\Gamma$. The edges in $\Gamma$ are called thin $2$-simplices.
\end{definition}
There are two canonical ways of presenting any simplicial set $X$ as a scaled simplicial set. We can form a scaled simplicial set $X^\flat$ (resp. $X^\sharp$) where only the degenerate $2$-simplices (resp. all of the $2$-simplices) of $X$ are thin.
\begin{definition}
Suppose $(X,\Gamma)$ and $(Y,\Omega)$ are scaled simplicial sets. A \textit{map} from $(X,\Gamma)$ to $(Y,\Omega)$ is a map $f:X\to Y$ such that $f(\Gamma)\subseteq \Omega$.
\end{definition}
\begin{remark}
The collection of scaled simplicial sets and maps between them forms a category, $\set_{\Deltab}^\mathrm{sc}$.
\end{remark}
\begin{notation}
We will not specify the collection of thin $2$-simplices, and we will usually write $X$ instead of $(X,\Gamma)$ if there is no risk of confusion.
If $X$ is a simplicial set, we will write $(\set_{\Deltab}^\mathrm{sc})_{/X}$ to denote $(\set_{\Deltab}^\mathrm{sc})_{/X^\sharp}$.
\end{notation}
\begin{remark}
Let $\set_{\Deltab}$ denote the model category of simplicial sets with the Kan model structure. A map is called \textit{anodyne} if it has the right lifting property with respect to every fibration. In other words, the collection of anodyne maps is simply the collection of trivial cofibrations in $\set_{\Deltab}$. A map $X\to {\Delta^0}$ is a fibration if it has the right lifting property with respect to the horn inclusions $\Lambda^n_i\hookrightarrow\Delta^n$ for $0\leq i \leq n$. Thus the horn inclusions $\Lambda^n_i\hookrightarrow\Delta^n$ for $0\leq i\leq n$ are generating trivial cofibrations for the Kan model structure. As this discussion shows, anodyne maps play a very important role in ordinary homotopy theory. This important notion can be constructed in the setting of marked simplicial sets as well.
\end{remark}
\begin{definition}
The class of marked anodyne maps is the smallest weakly saturated class of maps generated by:
\begin{enumerate}
\item The inclusions $(\Lambda^n_i)^\flat\hookrightarrow(\Delta^n)^\flat$ for $0<i<n$.
\item The inclusion $(\Lambda^n_n,(\mathrm{deg}_1(\Delta^n)\cup\Delta^{\{n-1,n\}})\cap(\Lambda^n_n)_1)\hookrightarrow(\Delta^n,\mathrm{deg}_1(\Delta^n)\cup\Delta^{\{n-1,n\}})$.
\item The inclusion $(\Lambda^2_1)^\sharp\coprod_{(\Lambda^2_1)^\flat}(\Delta^2)^\flat\to(\Delta^2)^\sharp$.
\item The map $K^\flat\to K^\sharp$ for every Kan complex $K$.
\end{enumerate}
\end{definition}
Let us recall the concept of a $p$-Cartesian morphism for an inner fibration $p:X\to S$.
\begin{definition}
Suppose $p:X\to S$ is an inner fibration and $f:x\to y$ an edge in $X$. $f$ is said to be $p$-Cartesian if the map $X_{/f}\to X_{/y}\times_{S_{/p(y)}}S_{/p(f)}$ is a trivial Kan fibration.
\end{definition}
\begin{prop}\label{charmarkedanodynemaps}
\cite[Proposition 3.1.1.6]{highertopos} A map $p:X\to S$ is a marked anodyne map if and only if it has the left lifting property with respect to every map $i: Y\to Z$ in $\set_{\Deltab}^+$ such that:
\begin{enumerate}
\item $i$ is an inner fibration on the underlying simplicial sets.
\item An edge of $Y$ is marked if and only if its image under $i$ is and it is $i$-Cartesian.
\item If $f$ is a map in $Z$ whose target is the image of a $0$-simplice of $Y$, then $f$ can be pulled back via $i$ to a map in $Y$.
\end{enumerate}
\end{prop}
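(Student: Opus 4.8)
The plan is to identify the described class of test maps with the class of fibrations associated to the marked anodyne maps, and then conclude by a standard factor-and-retract argument. Write $\mathcal{C}$ for the class of maps $i\colon Y\to Z$ in $\set_{\Deltab}^+$ satisfying conditions (1)--(3), and let $\mathcal{F}$ denote the class of maps having the \emph{right} lifting property (RLP) against every marked anodyne map. I would prove the equality $\mathcal{C}=\mathcal{F}$; granting it, the proposition becomes formal, since the marked anodyne maps coincide with the class of maps having the \emph{left} lifting property (LLP) against $\mathcal{F}$ by the small object argument. The key tool throughout is the characterization of $p$-Cartesian edges recalled in the Definition preceding the statement: the requirement that $X_{/f}\to X_{/y}\times_{S_{/p(y)}}S_{/p(f)}$ be a trivial Kan fibration unwinds into a family of horn-filling problems, and this is exactly what lets one pass between conditions (2)--(3) and the solvability of lifting problems.

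For the inclusion $\mathcal{C}\subseteq\mathcal{F}$ (equivalently, the ``only if'' direction): the class of maps with the LLP against a fixed map is weakly saturated, hence so is the class of maps with the LLP against every member of $\mathcal{C}$. As the marked anodyne maps are by definition the weak saturation of the four generating families, it suffices to check that each generator lifts against every $i\in\mathcal{C}$. The flat inner horns in (1) lift because $i$ is an inner fibration, which is condition (1). The marked final-horn inclusions in (2) lift because the distinguished edge $\Delta^{\{n-1,n\}}$ is marked, hence $i$-Cartesian by condition (2), so that the trivial-fibration property of a Cartesian edge supplies the filler. The map in (3), which forces the composite of two marked edges to be marked, lifts using condition (2) together with the stability of $i$-Cartesian edges under composition. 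Finally $K^\flat\to K^\sharp$ in (4) lifts by combining (2) and (3): every edge of a Kan complex is an equivalence, so its image admits an $i$-Cartesian lift by (3), which is marked by (2). I expect these checks to be routine given the recalled description of Cartesian edges.

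The reverse inclusion $\mathcal{F}\subseteq\mathcal{C}$ is where the real work lies, and it is the step I expect to be the main obstacle. Here one must decode the lifting properties back into the structural conditions. Lifting against the inner horns (1) yields that $i$ is an inner fibration, which is condition (1). Lifting against the final-horn family (2), taken over $\Delta^n$ for all $n$, yields both the existence of $i$-Cartesian lifts of edges whose target lies in the image (condition (3)) and the fact that every marked edge is $i$-Cartesian, while lifting against (3) and (4) supplies the converse implication, that an $i$-Cartesian edge lying over a marked edge is itself marked; together these give the biconditional of condition (2). Carrying this out rigorously requires a careful and somewhat intricate analysis of horn inclusions, with repeated appeal to the trivial-Kan-fibration criterion for Cartesian edges, and it is the delicate heart of the argument.

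Granting $\mathcal{C}=\mathcal{F}$, I would finish formally. Applying the small object argument to the generating marked anodyne maps factors an arbitrary $p\colon X\to S$ as $p=i\circ j$ with $j$ marked anodyne and $i\in\mathcal{F}=\mathcal{C}$. If $p$ has the LLP against every member of $\mathcal{C}$, then it lifts against $i$ in particular, and the resulting diagonal exhibits $p$ as a retract of $j$; since a weakly saturated class is closed under retracts, $p$ is marked anodyne. One set-theoretic point deserves attention: the family (4) is indexed by the proper class of all Kan complexes, so to invoke the small object argument one first reduces to a small generating set, for instance by imposing a cardinality bound on the Kan complexes that need to be tested, exactly as in \cite{highertopos}.
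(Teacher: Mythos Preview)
The paper does not supply its own proof of this proposition; it is stated with a bare citation to \cite[Proposition 3.1.1.6]{highertopos} and no argument is given. There is therefore nothing in the paper to compare against. Your outline is essentially the proof Lurie gives in \cite{highertopos}: verify each of the four generating families has the LLP against the class $\mathcal{C}$, show conversely that any map with the RLP against all marked anodynes satisfies (1)--(3), and close with the small-object/retract argument.

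One genuine imprecision worth flagging is your treatment of generator (4). The lifting problem for $K^\flat\to K^\sharp$ against $i\colon Y\to Z$ asks you to show that a \emph{given} edge $a(e)$ in $Y$ is marked, not merely that $i(a(e))$ admits \emph{some} $i$-Cartesian lift; invoking condition (3) produces a Cartesian lift, but not necessarily $a(e)$ itself. The actual argument uses that every edge of a Kan complex is an equivalence, so $a(e)$ is an equivalence, and equivalences are $i$-Cartesian for any inner fibration $i$; then condition (2) applies. This is exactly how Lurie handles it, and he also reduces class (4) to a set (indeed to a single generator) so that the small object argument is available, addressing the set-theoretic point you correctly raise at the end.
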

\begin{definition}
The collection of scaled anodyne maps is the smallest weakly saturated class generated by:
\begin{enumerate}
\item The inclusions $(\Lambda^n_i,(\mathrm{deg}_2(\Delta^n)\cup\{\Delta^{\{i-1,i,i+1\}}\})\cap\mathrm{Map}(\Delta^2,\Lambda^n_i))\hookrightarrow(\Delta^n,(\mathrm{deg}_2(\Delta^n)\cup\{\Delta^{\{i-1,i,i+1\}}\})$.
\item Let $T$ denote $\mathrm{deg}_2(\Delta^4)\cup\{\Delta^{\{0,2,4\}},\Delta^{\{1,2,3\}},\Delta^{\{0,1,3\}},\Delta^{\{1,3,4\}}\}\cup\Delta^{\{0,1,2\}}$. The inclusion $(\Delta^4,T)\hookrightarrow(\Delta^4,T\cup\{\Delta^{\{0,3,4\}},\Delta^{\{1,3,4\}}\})$.
\item Let $S$ denote $\mathrm{deg}_2(\Delta^n\coprod_{\Delta^{\{0,1\}}}\Delta^0)\cup\mathrm{im}(\Delta^{\{0,1,n\}})$. For $n>2$, the inclusion $(\Lambda^n_0\coprod_{\Delta^{\{0,1\}}}\Delta^0,S)\hookrightarrow(\Delta^n\coprod_{\Delta^{\{0,1\}}}\Delta^0,S)$.
\end{enumerate}
\end{definition}
The scaled anodyne maps act as generating trivial cofibrations in a particular model structure on $\set_{\Deltab}^\mathrm{sc}$, which we will now develop. The structure of the collection of scaled anodyne maps suggests that this model structure is very similar to the Joyal model structure. It is therefore important (and interesting) to develop a scaled analogue of the simplicial nerve functor, whose construction we will now recall.
\begin{construction}
Let $\cc$ be a $\set_{\Deltab}^+$-enriched category; there is a forgetful functor $\set_{\Deltab}^+\to\set_{\Deltab}$, so we may regard $\cc$ as a simplicial category. Define $\mathrm{N}^\mathrm{sc}(\cc)$ to be the following scaled simplicial set: the underlying simplicial set of $\mathrm{N}^\mathrm{sc}(\cc)$ is the simplicial nerve $\mathrm{N}(\cc)$ of $\cc$, and thin $2$-simplices are defined as follows. Let $\sigma:\Delta^2\to\mathrm{N}(\cc)$ be a $2$-simplex. This can be thought of a diagram in $\mathrm{N}(\cc)$ that shows a morphism $h:X\to Z$ in $\mathrm{N}(\cc)$ as the composition of a map $f\circ g$. Let $\alpha:\Delta^1\to\mathrm{Map}_\cc(X,Z)$ be an edge in $\mathrm{Map}_\cc(X,Z)$ joining $h$ to $f\circ g$. $\mathrm{Map}_\cc(X,Z)$ is a marked simplicial set, and $\sigma$ is thin if and only $\alpha$ is marked in $\mathrm{Map}_\cc(X,Z)$. This construction is functorial in $\cc$, and defines a functor $\mathrm{N}^\mathrm{sc}:\mathrm{Cat}_{\set_{\Deltab}^+}\to\set_{\Deltab}^\mathrm{sc}$. This functor has a left adjoint $\mathfrak{C}^\mathrm{sc}:\set_{\Deltab}^\mathrm{sc}\to\mathrm{Cat}_{\set_{\Deltab}^+}$.
\end{construction}
This construction leads to the following definition.
\begin{definition}
Let $f:X\to Y$ be a map of scaled simplicial sets. It is a \textit{bicategorical equivalence} if $\mathfrak{C}^\mathrm{sc}[f]$ is an equivalence of $\set_{\Deltab}^+$-enriched categories.
\end{definition}
In \cite{goodwillie}, the following two results are proved:
\begin{prop}\label{trivialcofib}
The functor $\mathfrak{C}^\mathrm{sc}$ preserves cofibrations and takes scaled anodyne maps to trivial cofibrations.
\end{prop}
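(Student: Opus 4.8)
The plan is to exploit that $\mathfrak{C}^\mathrm{sc}$ is a left adjoint, so that it preserves all colimits and retracts. The cofibrations of $\mathrm{Cat}_{\set_{\Deltab}^+}$ and the trivial cofibrations of $\mathrm{Cat}_{\set_{\Deltab}^+}$ each form a weakly saturated class (closed under pushout, transfinite composition, and retract), while the cofibrations and the scaled anodyne maps of $\set_{\Deltab}^\mathrm{sc}$ are by definition the smallest weakly saturated classes generated by explicit sets of maps. Since a colimit-preserving functor carries the weakly saturated closure of a set of maps into the weakly saturated closure of the images, it suffices to verify both assertions on generators. Concretely, the cofibrations are generated by the boundary inclusions $(\partial\Delta^n)^\flat\hookrightarrow(\Delta^n)^\flat$ together with the scaling map $(\Delta^2)^\flat\hookrightarrow(\Delta^2)^\sharp$, and the scaled anodyne maps are generated by the three families of the preceding definition.

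Everything downstream rests on an explicit description of $\mathfrak{C}^\mathrm{sc}[(\Delta^n,\Gamma)]$, which is forced by adjunction from the scaled nerve: its objects are $0,\dots,n$, and for $i\le j$ the mapping object $\Map(i,j)$ has underlying simplicial set the cube $\mathrm{N}(P_{i,j})$, where $P_{i,j}$ is the poset of subsets $U\subseteq\{i,\dots,j\}$ containing $i$ and $j$, and where the edge $U\subset U\cup\{k\}$ is marked precisely when the triangle spanned by $k$ and its immediate neighbors in $U$ lies in $\Gamma$. I would record this computation first. Granting it, $\mathfrak{C}^\mathrm{sc}$ of a generating cofibration is bijective on objects, and the induced map on each mapping object is either the boundary-of-cube inclusion $\partial(\Delta^1)^{n-1}\hookrightarrow(\Delta^1)^{n-1}$ of flat marked simplicial sets (in the $(\partial\Delta^n)^\flat\hookrightarrow(\Delta^n)^\flat$ case, where only $\Map(0,n)$ changes) or the single marking $(\Delta^1)^\flat\hookrightarrow(\Delta^1)^\sharp$ (for the scaling of $\Delta^2$, where only $\Map(0,2)$ changes). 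Both are cofibrations of marked simplicial sets, so $\mathfrak{C}^\mathrm{sc}$ preserves cofibrations.

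By the first assertion, $\mathfrak{C}^\mathrm{sc}$ already sends each (monomorphic) scaled anodyne generator to a cofibration, so it remains only to check that each is a Dwyer--Kan equivalence, i.e., essentially surjective (here always bijective on objects) and a weak equivalence of marked simplicial sets on every mapping object. For family (1) this generalizes the classical fact that $\mathfrak{C}[\Lambda^n_i]\to\mathfrak{C}[\Delta^n]$ is a trivial cofibration for $0<i<n$: only finitely many mapping cubes change, and the scaling of $\Delta^{\{i-1,i,i+1\}}$ is exactly what is needed to make the relevant cube-horn inclusion a product of a marked cube-horn with a flat cube, hence marked anodyne. For family (2) the underlying simplicial sets agree and only the scaling is enlarged, so the sole affected mapping object is the $3$-cube $\Map(0,4)$, and enlarging $T$ marks additional edges through pushouts of $(\Delta^1)^\flat\hookrightarrow(\Delta^1)^\sharp$ against cube boundaries, again a trivial cofibration. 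For family (3), which encodes invertibility of the edge $\{0,1\}$, the collapse $\Delta^{\{0,1\}}\to\Delta^0$ is present on both sides, so the map is bijective on objects and the affected mapping objects differ by the attachment of an outer cube-horn, which the identification of $\{0,1\}$ converts into a marked anodyne inclusion. In each case I would certify ``marked anodyne'' by checking the lifting property of Proposition~\ref{charmarkedanodynemaps}, concluding a weak equivalence of marked simplicial sets.

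The main obstacle is the bookkeeping for families (2) and (3): identifying precisely which edges of the affected cubes acquire markings and exhibiting the resulting maps as finite composites of pushouts of the marked anodyne generators and of $(\Delta^1)^\flat\hookrightarrow(\Delta^1)^\sharp$, rather than as mere cofibrations. The degenerate-edge quotient in family (3) is the delicate point, since one must verify that collapsing $\Delta^{\{0,1\}}$ does not disturb the marking rule on the cubes $\Map(i,j)$ for $i\ge 1$, and that the outer horn becomes fillable exactly because the initial edge has been rendered an equivalence; here I expect to lean most heavily on Proposition~\ref{charmarkedanodynemaps} to recognize the mapping-object maps as marked anodyne via a direct lifting argument.
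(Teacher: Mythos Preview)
The paper does not actually prove this proposition: it is stated immediately after the sentence ``In \cite{goodwillie}, the following two results are proved'' and is simply attributed to Lurie (it is \cite[Proposition~3.1.13]{goodwillie}). So there is no in-paper argument to compare against; your proposal is an attempt to reconstruct the proof that the paper only cites.

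Your outline is essentially Lurie's own argument in \cite{goodwillie}: reduce to generators using that $\mathfrak{C}^\mathrm{sc}$ is a left adjoint, compute $\mathfrak{C}^\mathrm{sc}[(\Delta^n,\Gamma)]$ explicitly via the poset cubes $P_{i,j}$ with the scaling-dependent markings, and then analyze each of the three generating families on mapping objects. The cofibration part and the treatment of family~(1) are standard and correct as you describe them. For family~(2) you are right that only markings on $\Map(0,4)$ change; the point is to recognize the resulting map as an instance of the marked anodyne generator $(\Lambda^2_1)^\sharp\coprod_{(\Lambda^2_1)^\flat}(\Delta^2)^\flat\to(\Delta^2)^\sharp$ (pushed out along a cofibration), rather than as a bare pushout of $(\Delta^1)^\flat\hookrightarrow(\Delta^1)^\sharp$, which is not marked anodyne on its own. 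For family~(3), your instinct that the collapse of $\Delta^{\{0,1\}}$ converts an outer cube-horn into a marked anodyne map is correct, and Lurie handles it by a direct cube analysis rather than by invoking the lifting criterion of Proposition~\ref{charmarkedanodynemaps}; either route works, but the explicit combinatorics is short enough that a direct check is cleaner than setting up the lifting problem. In sum, your plan is sound and matches the cited reference; the only substantive correction is the mechanism in family~(2).
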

\begin{lemma}
Bicategorical equivalences satisfy the following properties:
\begin{enumerate}
\item Any scaled anodyne map is a bicategorical equivalence.
\item Bicategorical equivalences are closed under pushouts by cofibrations.
\end{enumerate}
\end{lemma}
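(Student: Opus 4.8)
The plan is to derive both parts from Proposition~\ref{trivialcofib} together with the (combinatorial) model structure on $\mathrm{Cat}_{\set_{\Deltab}^+}$ whose weak equivalences are exactly the equivalences of $\set_{\Deltab}^+$-enriched categories. By definition, $f$ is a bicategorical equivalence precisely when $\mathfrak{C}^{\mathrm{sc}}[f]$ is a weak equivalence in this model structure, so everything reduces to tracking $\mathfrak{C}^{\mathrm{sc}}$.

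Part (1) is almost formal. If $f\colon X\to Y$ is scaled anodyne, then Proposition~\ref{trivialcofib} tells us that $\mathfrak{C}^{\mathrm{sc}}[f]$ is a trivial cofibration in $\mathrm{Cat}_{\set_{\Deltab}^+}$; in particular it is a weak equivalence, i.e.\ an equivalence of $\set_{\Deltab}^+$-enriched categories. By the very definition of bicategorical equivalence this says that $f$ is a bicategorical equivalence.

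For part (2), consider a pushout square
\[
\begin{CD}
A @>g>> B \\
@VfVV @VV\bar{f}V \\
C @>>> D
\end{CD}
\]
in $\set_{\Deltab}^{\mathrm{sc}}$ in which $g$ is a cofibration and $f$ is a bicategorical equivalence; the claim is that $\bar{f}$ is again a bicategorical equivalence. The key preliminary observation is that $\mathfrak{C}^{\mathrm{sc}}[Z]$ is cofibrant for every scaled simplicial set $Z$: the map $\emptyset\to Z$ is a monomorphism, hence a cofibration, so by Proposition~\ref{trivialcofib} the map $\emptyset=\mathfrak{C}^{\mathrm{sc}}[\emptyset]\to\mathfrak{C}^{\mathrm{sc}}[Z]$ is a cofibration. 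In particular $\mathfrak{C}^{\mathrm{sc}}[A]$, $\mathfrak{C}^{\mathrm{sc}}[B]$ and $\mathfrak{C}^{\mathrm{sc}}[C]$ are all cofibrant. Because $\mathfrak{C}^{\mathrm{sc}}$ is a left adjoint it preserves pushouts, so applying it to the square yields a pushout in $\mathrm{Cat}_{\set_{\Deltab}^+}$ in which $\mathfrak{C}^{\mathrm{sc}}[g]$ is a cofibration (Proposition~\ref{trivialcofib}) and $\mathfrak{C}^{\mathrm{sc}}[f]$ is a weak equivalence.

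I would finish with the gluing (cube) lemma, which is valid in any model category: a map of pushout spans, all six of whose objects are cofibrant, whose chosen attaching legs are cofibrations, and whose three comparison maps are weak equivalences, induces a weak equivalence on pushouts. Applying it to the comparison between $\mathfrak{C}^{\mathrm{sc}}[B]\xleftarrow{\mathfrak{C}^{\mathrm{sc}}[g]}\mathfrak{C}^{\mathrm{sc}}[A]\xrightarrow{\mathrm{id}}\mathfrak{C}^{\mathrm{sc}}[A]$ and $\mathfrak{C}^{\mathrm{sc}}[B]\xleftarrow{\mathfrak{C}^{\mathrm{sc}}[g]}\mathfrak{C}^{\mathrm{sc}}[A]\xrightarrow{\mathfrak{C}^{\mathrm{sc}}[f]}\mathfrak{C}^{\mathrm{sc}}[C]$, given by the identity on the first two terms and by $\mathfrak{C}^{\mathrm{sc}}[f]$ on the third, identifies the induced map on pushouts with $\mathfrak{C}^{\mathrm{sc}}[\bar{f}]\colon\mathfrak{C}^{\mathrm{sc}}[B]\to\mathfrak{C}^{\mathrm{sc}}[D]$ and shows it is a weak equivalence; hence $\bar{f}$ is a bicategorical equivalence. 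The main obstacle is precisely this last step: since I do not want to assume that $\mathrm{Cat}_{\set_{\Deltab}^+}$ is left proper, the cofibrancy observation above is what makes the general gluing lemma applicable in place of left properness, and essentially all of the genuine content has been absorbed into Proposition~\ref{trivialcofib} (preservation of cofibrations and of scaled anodyne maps), which we are taking as given. The only remaining care is to confirm that $\mathrm{Cat}_{\set_{\Deltab}^+}$ really carries a model structure with the stated weak equivalences and cofibrations, and that the hypotheses of the gluing lemma are met verbatim.
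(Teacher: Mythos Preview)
The paper does not actually supply a proof of this lemma: it is stated immediately after the sentence ``In \cite{goodwillie}, the following two results are proved,'' so the paper's ``proof'' is a bare citation to Lurie. Your proposal therefore fills in what the paper omits, and it does so correctly. Part (1) is exactly the intended one-line deduction from Proposition~\ref{trivialcofib}. For part (2), your argument via the gluing lemma is valid: the key observation that every $\mathfrak{C}^{\mathrm{sc}}[Z]$ is cofibrant (since $\emptyset\to Z$ is a monomorphism and $\mathfrak{C}^{\mathrm{sc}}$ preserves cofibrations) is precisely what lets the cube lemma apply without invoking left properness of $\mathrm{Cat}_{\set_{\Deltab}^+}$.

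One small remark on approach: the paper itself later \emph{does} use that $\mathrm{Cat}_{\set_{\Deltab}^+}$ is left proper (see the proof of Theorem~\ref{leftmodel}), and this is established in \cite{highertopos}. So a shorter route to (2) is simply: $\mathfrak{C}^{\mathrm{sc}}$ preserves pushouts and sends the cofibration $g$ to a cofibration, hence by left properness the pushout of the weak equivalence $\mathfrak{C}^{\mathrm{sc}}[f]$ along $\mathfrak{C}^{\mathrm{sc}}[g]$ is again a weak equivalence. Your version trades that single appeal to left properness for the cofibrancy observation plus the general gluing lemma; both work, and yours has the mild advantage of being self-contained once Proposition~\ref{trivialcofib} is granted.
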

\begin{remark}
The second part of this lemma suggests the existence of a model structure on $\set_{\Deltab}^\mathrm{sc}$ that satisfies \cite[Proposition A.2.6.13]{highertopos}. Indeed, the above proposition shows that $\mathfrak{C}^\mathrm{sc}$ is a functor that gives a bijection between the collection of bicategorical equivalences and the image of the collection of weak equivalences in $\mathrm{Cat}_{\set_{\Deltab}^+}$ under $\mathfrak{C}^\mathrm{sc}$.
\end{remark}
Lurie has proven (c.f. \cite{goodwillie}) that the last condition of \cite[Proposition A.2.6.13]{highertopos} is satisfied, giving a model structure very similar to the Joyal model structure:
\begin{theorem}\label{scaledmodel}
There is a left proper combinatorial model structure on $\set_{\Deltab}^\mathrm{sc}$ whose weak equivalences are the bicategorical equivalences and cofibrations are the monomorphisms on the underlying simplicial sets. The functors $\mathrm{N}^\mathrm{sc}$ and $\mathfrak{C}^\mathrm{sc}$ form a Quillen adjunction, and more precisely a Quillen equivalence, between $\set_{\Deltab}^\mathrm{sc}$ and $\mathrm{Cat}_{\set_{\Deltab}^+}$.
\end{theorem}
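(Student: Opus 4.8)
The plan is to obtain the model structure from the recognition criterion \cite[Proposition A.2.6.13]{highertopos} and then to promote the adjunction to a Quillen equivalence via the fibrant-counit criterion. First I would record the structural inputs. The category $\set_{\Deltab}^\mathrm{sc}$ is presentable — scaled simplicial sets are the presheaves on the category obtained from $\mathbf\Delta$ by adjoining a universal thin $2$-simplex — and I would take as generating cofibrations the set $C_0$ consisting of the boundary inclusions $(\partial\Delta^n)^\flat\hookrightarrow(\Delta^n)^\flat$ together with the scaling $(\Delta^2)^\flat\hookrightarrow(\Delta^2)^\sharp$; its weakly saturated closure $\overline{C_0}$ is exactly the class of maps that are monomorphisms on underlying simplicial sets. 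The class $W$ of bicategorical equivalences is perfect: $\mathfrak{C}^\mathrm{sc}$ is a left adjoint, hence accessible and compatible with filtered colimits, and the weak equivalences of the combinatorial model category $\mathrm{Cat}_{\set_{\Deltab}^+}$ form a perfect class, so their preimage $W=(\mathfrak{C}^\mathrm{sc})^{-1}(\text{equivalences})$ is again perfect.

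It then remains to check the two nontrivial hypotheses of \cite[Proposition A.2.6.13]{highertopos}. Stability of trivial cofibrations under pushout is supplied by the second clause of the Lemma stated above (bicategorical equivalences are closed under pushout along cofibrations); and the remaining hypothesis — that any map with the right lifting property against every element of $C_0$ lies in $W$ — is precisely the statement of Lurie established in \cite{goodwillie} and quoted just above. Granting these, \cite[Proposition A.2.6.13]{highertopos} produces a left proper combinatorial model structure on $\set_{\Deltab}^\mathrm{sc}$ whose cofibrations are the monomorphisms $\overline{C_0}$ and whose weak equivalences are the bicategorical equivalences, as required.

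For the Quillen adjunction I would show that the left adjoint $\mathfrak{C}^\mathrm{sc}$ is left Quillen. Proposition \ref{trivialcofib} shows that $\mathfrak{C}^\mathrm{sc}$ preserves cofibrations and sends scaled anodyne maps to trivial cofibrations; since $\mathfrak{C}^\mathrm{sc}$ preserves colimits and the trivial cofibrations are the weakly saturated class generated by the scaled anodyne maps, it follows that $\mathfrak{C}^\mathrm{sc}$ carries all trivial cofibrations to trivial cofibrations. Hence $(\mathfrak{C}^\mathrm{sc},\mathrm{N}^\mathrm{sc})$ is a Quillen adjunction.

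Finally, to upgrade this to a Quillen equivalence I would use the criterion that a Quillen adjunction is an equivalence once the left adjoint reflects weak equivalences between cofibrant objects and, for every fibrant object $\cc$, the counit $\mathfrak{C}^\mathrm{sc}[\mathrm{N}^\mathrm{sc}(\cc)]\to\cc$ is a weak equivalence (here no cofibrant replacement is needed, since every object of $\set_{\Deltab}^\mathrm{sc}$ is cofibrant). The reflection hypothesis is automatic: $W$ is \emph{defined} as the class of maps inverted by $\mathfrak{C}^\mathrm{sc}$, so $\mathfrak{C}^\mathrm{sc}$ both preserves and reflects weak equivalences. The entire content is therefore the counit statement, and this is where I expect the main difficulty to lie. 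One must compute the $\set_{\Deltab}^+$-enriched category $\mathfrak{C}^\mathrm{sc}[\mathrm{N}^\mathrm{sc}(\cc)]$ — essential surjectivity on objects is clear since the scaled nerve leaves the object set unchanged, so the work is concentrated in identifying the mapping objects and exhibiting the counit as an enriched weak equivalence on each of them. This is the scaled counterpart of the rigidification computation that underlies the simplicial-nerve Quillen equivalence of \cite[\S2.2]{highertopos}, and it is carried out in \cite{goodwillie} using the explicit description of $\mathfrak{C}^\mathrm{sc}$ on simplices together with the generating scaled anodyne maps.
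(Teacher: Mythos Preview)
Your outline matches the paper's approach exactly: the paper does not give its own proof but simply records that the hypotheses of \cite[Proposition A.2.6.13]{highertopos} are supplied by Proposition~\ref{trivialcofib} and the subsequent Lemma, with the remaining hypothesis (and the Quillen equivalence) deferred to \cite{goodwillie}. Your write-up is a faithful and more detailed unpacking of that same strategy.

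There is, however, one genuine slip in your Quillen adjunction paragraph. You assert that ``the trivial cofibrations are the weakly saturated class generated by the scaled anodyne maps,'' and use this to conclude that $\mathfrak{C}^\mathrm{sc}$ preserves trivial cofibrations. That assertion is not justified and is in fact false in general: scaled anodyne maps are trivial cofibrations, but the model structure produced by \cite[Proposition A.2.6.13]{highertopos} need not have its trivial cofibrations generated by any prescribed set, and there are bicategorical equivalences that are cofibrations but not scaled anodyne. Fortunately the conclusion you want is immediate for a much simpler reason, one you yourself invoke a paragraph later: by definition a map $f$ is a bicategorical equivalence precisely when $\mathfrak{C}^\mathrm{sc}[f]$ is a weak equivalence in $\mathrm{Cat}_{\set_{\Deltab}^+}$, so $\mathfrak{C}^\mathrm{sc}$ preserves all weak equivalences, and since it also preserves cofibrations (Proposition~\ref{trivialcofib}) it is left Quillen. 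You should replace the scaled-anodyne argument with this one-line observation.
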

\begin{definition}
An $(\infty,2)$-category is a simplicial set with the right lifting property with respect to the scaled anodyne maps.
\end{definition}
\begin{example}
A fibrant object of $\set_{\Deltab}^\mathrm{sc}$ is an $(\infty,2)$-category.
\end{example}
\begin{remark}
This makes sense because if $X$ is a fibrant object of $\set_{\Deltab}^\mathrm{sc}$, and if $x,y\in X$, then $\mathrm{Map}_X(x,y)$ is a fibrant object of $\set_{\Deltab}^+$, i.e., an $\infty$-category.
\end{remark}
\begin{example}\label{basiclimits}
Consider the model category $\set^+_\Deltab$ of scaled simplicial sets; this is enriched over $\set_\Deltab$. Regarding every mapping simplicial set as a marked simplicial set (via the marking where every $1$-simplex is marked), this becomes a fibrant $\set^+_\Deltab$-enriched category, so one can take the scaled nerve, giving the $(\infty,2)$-category $\mathrm{Cat}_\infty$ of $\infty$-categories.
\end{example}
\begin{example}\label{prototype}
Let $\mathbf{Cat}^\mathrm{st}_\infty$ be the $\set_{\Deltab}^+$-enriched category of stable $\infty$-categories $\cc,\cd$, where $\mathrm{Map}_{\mathbf{Cat}^\mathrm{st}_\infty}(\cc,\cd)$ is defined to be $\mathrm{Fun}^\mathrm{ex}(\cc,\cd)$. Note that for all stable $\infty$-categories $\cc$ and $\cd$, $\mathrm{Map}_{\mathbf{Cat}^\mathrm{st}_\infty}(\cc,\cd)$ is itself a stable $\infty$-category. Define $\mathrm{Cat}^\mathrm{st}_\infty$ to be the scaled nerve $\mathrm{N}^\mathrm{sc}(\mathbf{Cat}^\mathrm{st}_\infty)$ of $\mathbf{Cat}^\mathrm{st}_\infty$. This is a subcategory of $\mathrm{Cat}_\infty$ since the identity functor is exact and a composition of exact functors is also exact.
\end{example}
The Cartesian product of scaled simplicial sets gives $\set^\mathrm{sc}_\Deltab$ the structure of a monoidal model category.
\begin{prop}\label{monoidalmodel}
The Cartesian product of scaled simplicial sets defined as $(X,\ce)\times(X^\prime,\ce^\prime)=(X\times X^\prime,\ce\times\ce^\prime)$ endows $\set^{\mathrm{sc}}_{\Deltab}$ with the structure of a monoidal model category.
\end{prop}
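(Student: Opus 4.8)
The plan is to verify the two axioms that make a closed symmetric monoidal structure into a monoidal model category in the standard sense (see \cite{highertopos} or Hovey): the pushout-product axiom and the unit axiom. First observe that $\set^{\mathrm{sc}}_{\Deltab}$ is a presheaf category — a scaled simplicial set is a presheaf on the category obtained from $\Deltab$ by adjoining a single object classifying a thin $2$-simplex, just as marked simplicial sets are presheaves on $\Deltab$ with an adjoined marked edge — so it is Cartesian closed, and in particular the functor $(-)\times Y$ preserves all colimits for every $Y$. One checks that the product $(X,\ce)\times(X^\prime,\ce^\prime)=(X\times X^\prime,\ce\times\ce^\prime)$ is indeed the categorical product in this presheaf category, with monoidal unit $(\Delta^0)^\flat=(\Delta^0)^\sharp$, the point. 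Since by Theorem \ref{scaledmodel} the cofibrations are exactly the monomorphisms on underlying simplicial sets, and $\emptyset\hookrightarrow X$ is always such a monomorphism, every object is cofibrant; the unit is therefore cofibrant and the unit axiom holds automatically.

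For the pushout-product axiom, write $f\mathbin{\square}g$ for the pushout-product, so that for $f\colon A\to B$ and $g\colon C\to D$,
\[
f\mathbin{\square}g\colon (A\times D)\coprod_{A\times C}(B\times C)\longrightarrow B\times D .
\]
Because $\times$ preserves colimits in each variable, the standard saturation principle applies: if $S,T$ are sets of morphisms and $\overline{(-)}$ denotes weakly saturated closure, then $\overline S\mathbin{\square}\overline T\subseteq\overline{S\mathbin{\square}T}$. Thus it suffices to verify the axiom on generators. As generating cofibrations take the boundary inclusions $(\partial\Delta^n)^\flat\hookrightarrow(\Delta^n)^\flat$ together with the scaling map $(\Delta^2)^\flat\hookrightarrow(\Delta^2)^\sharp$, which together generate all monomorphisms; as generating trivial cofibrations take the three families of scaled anodyne maps, which are trivial cofibrations since they are monomorphisms and, as recalled above, bicategorical equivalences.

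The cofibration half of the axiom is then immediate: the pushout-product of two monomorphisms of presheaves is again a monomorphism on underlying simplicial sets, so each $i\mathbin{\square}i^\prime$ of generating cofibrations is a cofibration, and $\overline I\mathbin{\square}\overline I\subseteq\overline{I\mathbin{\square}I}$ settles the general case. The content of the proposition is the trivial-cofibration half, for which the same saturation argument reduces matters to showing that $a\mathbin{\square}i$ is a trivial cofibration — most naturally, that it is \emph{scaled anodyne} — whenever $a$ is one of the generating scaled anodyne maps and $i$ is one of the two generating cofibrations. Granting this, $\overline{J\mathbin{\square}I}$ consists of trivial cofibrations, so the pushout-product of any trivial cofibration with any cofibration is a trivial cofibration, completing the axiom.

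The main obstacle is precisely this last combinatorial verification, where all the real work lies. For each generating scaled anodyne map one filters the target of $a\mathbin{\square}i$ by the skeleta of the relevant product $\Delta^m\times\Delta^n$, using the shuffle decomposition of the nondegenerate simplices of a product of simplices, and exhibits each stage as a pushout along a scaled anodyne map, all the while tracking the thin $2$-simplices so that the resulting scalings are exactly those prescribed in the definition of scaled anodyne. The cases arising from the horn-type generator (1) follow the pattern of the inner-anodyne computation underlying the Joyal model structure, while the scaling map $(\Delta^2)^\flat\hookrightarrow(\Delta^2)^\sharp$ is governed by generator (3) of the definition. The genuinely delicate cases are the degeneracy-laden generator (2) on $\Delta^4$ and the pushout generator (3) built from $\Lambda^n_0\coprod_{\Delta^{\{0,1\}}}\Delta^0$, where one must confirm that the collapsed edge and the finely prescribed thin triangles survive each stage of the filtration; these parallel, and are best organized after, Lurie's corresponding arguments for marked and scaled simplicial sets in \cite{goodwillie}.
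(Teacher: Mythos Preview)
Your approach has a genuine gap in the trivial-cofibration half of the pushout-product axiom. You take the three families of scaled anodyne maps as ``generating trivial cofibrations,'' but this is nowhere established: the model structure of Theorem~\ref{scaledmodel} is produced via \cite[Proposition~A.2.6.13]{highertopos}, which specifies the weak equivalences externally (as bicategorical equivalences) and does not furnish an explicit generating set for the trivial cofibrations. The situation parallels the Joyal model structure on $\set_\Deltab$, where inner anodyne maps are trivial cofibrations but their saturation is strictly smaller than the class of all trivial cofibrations --- the inclusion of a vertex into the nerve of the walking isomorphism, for instance, is a categorical equivalence but not inner anodyne. Consequently your saturation step $\overline{J}\mathbin{\square}\overline{I}\subseteq\overline{J\mathbin{\square}I}$ only shows that the pushout-product of a \emph{scaled anodyne} map with a cofibration lies in the scaled anodyne saturation; it says nothing about an arbitrary trivial cofibration, so the axiom does not follow.

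The paper closes this gap by a different route. Rather than reducing to anodyne generators, it invokes \cite[Lemma~4.2.6]{goodwillie}, which asserts directly that the Cartesian product with a fixed scaled simplicial set preserves bicategorical equivalences. Since every object is cofibrant, this combines with the cofibration half (which you have) and a two-out-of-three argument to give the full pushout-product axiom: if $i\colon X\to X'$ is a trivial cofibration and $j\colon Y\to Y'$ a cofibration, then $i\times Y$ is a trivial cofibration, its pushout along $X\times j$ is again one, and since $i\times Y'$ is a weak equivalence the map $i\vee j$ is too. Your shuffle-filtration programme would, if carried out, establish the useful auxiliary fact that scaled anodyne maps are stable under pushout-product with cofibrations, but to finish the proposition you must either appeal to that lemma or otherwise prove that products preserve bicategorical equivalences.
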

\begin{proof}
It is easy to see that the functor $\times:\set^{\mathrm{sc}}_{\Deltab}\times\set^{\mathrm{sc}}_{\Deltab}\to\set^{\mathrm{sc}}_{\Deltab}$ preserves colimits separately in each variable. We claim that if $i:(X,\ce)\to(X^\prime,\ce^\prime)$ or $j:(Y,\cf)\to(Y^\prime,\cf^\prime)$ is a (trivial) cofibration, then the induced map $i\vee j:(X^\prime\times Y,\ce^\prime\times\cf)\coprod_{(X\times Y,\ce\times\cf)}(X\times Y^\prime,\ce\times\cf^\prime)\to(X^\prime\times Y^\prime,\ce^\prime\times\cf^\prime)$ is also a (trivial) cofibration. If $i$ or $j$ is a cofibration, then it is automatically true that $i\vee j$ is, since the condition that a map is a cofibration depends only on the underlying simplicial set. We now observe that the Cartesian product makes $\set_{\Deltab}$ with the Joyal model structure into a monoidal model category. This proves the claim.
We must now show that if $i$ or $j$ is a trivial cofibration, then so is $i\vee j$. Let us work one variable at a time. Suppose that $i$ is a trivial cofibration and $Y=Y^\prime$. By definition, $\mathfrak{C}^\mathrm{sc}[X]\simeq\mathfrak{C}^\mathrm{sc}[X^\prime]$ and $\mathfrak{C}^\mathrm{sc}[Y]\simeq\mathfrak{C}^\mathrm{sc}[Y^\prime]$. Since $\mathfrak{C}^\mathrm{sc}$ is a left adjoint, it preserves colimits (and pushouts, in particular), so that there is a pushout diagram $\mathfrak{C}^\mathrm{sc}[i\vee j]:\mathfrak{C}^\mathrm{sc}[(X^\prime\times Y,\ce^\prime\times\cf)]\coprod_{\mathfrak{C}^\mathrm{sc}[(X\times Y,\ce\times\cf)]}\mathfrak{C}^\mathrm{sc}[(X\times Y^\prime,\ce\times\cf^\prime)]\to\mathfrak{C}^\mathrm{sc}[(X^\prime\times Y^\prime,\ce^\prime\times\cf^\prime)]$. It now suffices to show that the Cartesian product preserves weak equivalences; this is precisely the content of \cite[Lemma 4.2.6]{goodwillie}. There is now a chain of equivalences $\mathfrak{C}^\mathrm{sc}[(X^\prime\times Y,\ce^\prime\times\cf)]\simeq{\mathfrak{C}^\mathrm{sc}[(X\times Y,\ce\times\cf)]}\simeq\mathfrak{C}^\mathrm{sc}[(X\times Y^\prime,\ce\times\cf^\prime)]\simeq\mathfrak{C}^\mathrm{sc}[(X^\prime\times Y^\prime,\ce^\prime\times\cf^\prime)]$. This shows that if $i$ or $j$ is a trivial cofibration, so is $i\vee j$, which concludes the proof that the functor $\times:\set^{\mathrm{sc}}_{\Deltab}\times\set^{\mathrm{sc}}_{\Deltab}\to\set^{\mathrm{sc}}_{\Deltab}$ is a left Quillen bifunctor.
Every object is cofibrant, and the monoidal structure is closed and symmetric. Hence the Cartesian product endows $\set^{\mathrm{sc}}_{\Deltab}$ with the structure of a monoidal model category.
\end{proof}

\subsection{Fibrations of scaled simplicial sets}
We need a preliminary definition.
\begin{definition}
Let $(Y,\mathbf{Y})$ and $(K,\mathbf{K})$ be scaled simplicial sets. Define $Y\diamond K$ as the alternative join construction of $Y$ and $K$ as simplicial sets with the obvious thin $2$-simplices.
\end{definition}
\begin{remark}
This can be viewed as the coproduct, with the Gray product (from \cite{harpaz}):
$$\overline{Y}\coprod_{Y\times K\times \Delta^{\{0\}}}\overline{Y}\times_\mathrm{gr}\overline{K}\times_\mathrm{gr}\overline{\Delta^1}\coprod_{Y\times K\times\Delta^{\{1\}}}\overline{K},$$
where $\overline{Y}=(Y,\mathrm{deg}_1(Y),\mathbf{Y})$, $\overline{K}=(K,\mathrm{deg}_1(K),\mathbf{K})$, and $\overline{\Delta^1}=(\Delta^1,\mathrm{deg}_1(\Delta^1),\mathrm{deg}_2(\Delta^1))$.
\end{remark}
\begin{definition}
A map $p:X\to S$ is an \textit{inner $2$-fibration} if it has the right lifting property with respect to all scaled anodyne maps.
\end{definition}
\begin{remark}
Any fibration in the ordinary model structure on $\set^\mathrm{sc}_\Deltab$ is an inner $2$-fibration, since every scaled anodyne map is a trivial cofibration. If $p:X\to S$ is an inner $2$-fibration, then the underlying map of simplicial sets is an inner fibration, because it has the right lifting property with respect to the inclusions $\Lambda^n_i\hookrightarrow \Delta^n$ for $0<i<n$. Recall that if $X$ is a simplicial set, the map $\mathrm{Fun}(\Delta^1,X)\to X$ given by evaluating at $0$ (resp. $1$) is a Cartesian (resp. coCartesian) fibration. Thus, if $p:X\to S$ is an inner $2$-fibration such that the underlying map of simplicial sets is a Cartesian fibration, then the composition $\mathrm{Map}((\Delta^1)^\sharp,X)\to X\xrightarrow{p} S$ given by evaluating on $0$ is a Cartesian fibration on the underlying simplicial sets.
\end{remark}
\begin{definition}
Let $p:X\to S$ be an inner $2$-fibration whose underlying map of simplicial sets is a Cartesian fibration. A $2$-simplex $\phi:f\to g$ in $X$ is \textit{$p$-Cartesian} if the following condition holds: let $q:\mathrm{Map}(\Delta^1,X)\to X$ denote the map by evaluation on the initial vertex, i.e., $0$. Then $\phi$, when viewed as a $1$-simplex of $\mathrm{Map}(\Delta^1,X)$, is $p\circ q$-Cartesian. 
\end{definition}
The collection of $p$-Cartesian $2$-simplices of $X$ forms a scaled simplicial set $X^\natural$.
\begin{definition}
A \textit{Cartesian $2$-fibration} of scaled simplicial sets is a map $p:X\to S$ of scaled simplicial sets such that:
\begin{itemize}
\item $p$ is an inner $2$-fibration whose underlying map of simplicial sets is a Cartesian fibration.
\item A thin $2$-simplex $s\to p(y)$ in $S$ can be pulled back to a thin $p$-Cartesian $2$-simplex $x\to y$ in $X$ such that $p(x)=s$.
\end{itemize}
\end{definition}
A map $p:X\to S$ is a \textit{left $2$-fibration} if $p$ is a coCartesian $2$-fibration and $p$ has the right lifting property with respect to the collection of inclusions $(\Lambda^n_0,(\mathrm{deg}_2(\Delta^n)\cup\{\Delta^{\{0,1\}}\})\cap\mathrm{Map}(\Delta^2,\Lambda^n_0))\hookrightarrow(\Delta^n,(\mathrm{deg}_2(\Delta^n)\cup\{\Delta^{\{0,1\}}\})$. This is equivalent to asking that $p:X\to S$ is a coCartesian $2$-fibration and all $2$-simplices are $p$-Cartesian.
\begin{theorem}\label{leftmodel}
There is a model structure on $(\set^\mathrm{sc}_\Deltab)_{/S}$ such that:
\begin{itemize}
\item The cofibrations are the monomorphisms of the underlying simplicial sets.
\item The weak equivalences are those maps $f:X\to Y$ such that the induced map $\mathfrak{C}^\mathrm{sc}[X^\vartriangleleft\coprod_X S]\to\mathfrak{C}^\mathrm{sc}[Y^\vartriangleleft\coprod_Y S]$ is an equivalence of $\set^+_\Deltab$-enriched categories.
\item The fibrations are those maps with the right lifting property with respect to those maps which are both cofibrations and weak equivalences (these are the trivial cofibrations).
\end{itemize}
\end{theorem}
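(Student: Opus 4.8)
The plan is to invoke the recognition criterion \cite[Proposition A.2.6.13]{highertopos}, exactly as in the construction of the ambient model structure of Theorem \ref{scaledmodel}. Write $\mathbf{A}=(\set^{\mathrm{sc}}_{\Deltab})_{/S}$, which is presentable as a slice of a presentable category. The class $C$ of cofibrations is the class of monomorphisms on underlying simplicial sets; this is the weakly saturated class generated by a small set $C_0$ (the boundary inclusions together with the scaling generators, pulled into the slice), so the cofibration hypothesis of \cite[Proposition A.2.6.13]{highertopos} holds automatically. Introduce the functor $G\colon \mathbf{A}\to \mathrm{Cat}_{\set^+_{\Deltab}}$ by $G(X)=\mathfrak{C}^{\mathrm{sc}}[X^\vartriangleleft\coprod_X S]$, where $X^\vartriangleleft=\Delta^0\diamond X$, and declare $W=G^{-1}(\text{equivalences of }\set^+_{\Deltab}\text{-enriched categories})$; this is the class of weak equivalences in the statement. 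The whole argument reduces to extracting the three hypotheses of \cite[Proposition A.2.6.13]{highertopos} from good formal properties of $G$.

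First I would record the formal properties of $G$ on which everything rests. The alternative join $\diamond$ of \cite{harpaz} is designed to preserve colimits separately in each variable (this is its principal advantage over the ordinary join), so $X\mapsto X^\vartriangleleft$ preserves colimits; and $\mathfrak{C}^{\mathrm{sc}}$ is a left adjoint. A Fubini argument for iterated pushouts then shows that the functor $H(X)=X^\vartriangleleft\coprod_X S$ carries a pushout $Y\coprod_X X'$ in $\mathbf{A}$ to $H(Y)\coprod_{H(X)} H(X')$, so that $G$ preserves pushouts; a check on simplices shows $H$, and hence $G$, preserves monomorphisms; and the base inclusion $X\hookrightarrow X^\vartriangleleft$ is a cofibration, so by left properness of Theorem \ref{scaledmodel} and the gluing lemma, $H$ preserves bicategorical equivalences between (necessarily cofibrant) objects. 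Since $\mathfrak{C}^{\mathrm{sc}}$ preserves cofibrations (Proposition \ref{trivialcofib}) and, by Ken Brown's lemma, weak equivalences between cofibrant objects, $G$ sends cofibrations to cofibrations and bicategorical equivalences to equivalences.

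With these in hand the three conditions follow. Perfectness of $W$: the equivalences of $\set^+_{\Deltab}$-enriched categories form a perfect class in the combinatorial model category $\mathrm{Cat}_{\set^+_{\Deltab}}$; the functor $G$ preserves filtered colimits, since $\diamond$ and $\mathfrak{C}^{\mathrm{sc}}$ preserve all colimits and the constant diagram at $S$ has filtered colimit $S$; and $W=G^{-1}(-)$. Thus $W$ contains the isomorphisms, satisfies two-out-of-three, is closed under retracts, and is accessible. Stability of $C\cap W$ under pushout: if $f\in C\cap W$ then $G(f)$ is simultaneously a cofibration and an equivalence, i.e.\ a trivial cofibration in $\mathrm{Cat}_{\set^+_{\Deltab}}$; since $G$ preserves the defining pushout, the cobase change of $f$ is carried to the cobase change of the trivial cofibration $G(f)$, which is again a trivial cofibration and in particular a weak equivalence. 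Finally, if $p$ has the right lifting property with respect to all of $C$, then its underlying map is a trivial fibration in the scaled model structure, hence a bicategorical equivalence, so $G(p)$ is an equivalence and $p\in W$. Now \cite[Proposition A.2.6.13]{highertopos} produces a left proper combinatorial model structure with cofibrations $C$ and weak equivalences $W$, whose fibrations are by construction the maps with the right lifting property against $C\cap W$, as desired.

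I expect the genuine obstacle to be the second paragraph: verifying that $H(X)=X^\vartriangleleft\coprod_X S$ is as well behaved as claimed, namely that it preserves pushouts, monomorphisms, and weak equivalences between cofibrant objects. Because $H$ is not colimit-preserving (the fixed base $S$ spoils preservation of general colimits), each of these must be argued by hand; the pushout-preservation in particular relies essentially on the colimit-preservation of the Gray/alternative join $\diamond$ rather than the classical join $\star$, for which the analogous identity fails, together with the compatibility of the cone point with the structure map to $S$.
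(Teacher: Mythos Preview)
Your proposal is correct and follows essentially the same approach as the paper: both invoke \cite[Proposition A.2.6.13]{highertopos} following the template of \cite[Proposition 2.1.4.7]{highertopos}, verifying perfectness of $W$, stability of trivial cofibrations under pushout via the colimit- and cofibration-preservation properties of $\mathfrak{C}^{\mathrm{sc}}$ and $H$, and that maps with the right lifting property against all cofibrations are weak equivalences. The only notable difference is in this last step: the paper builds an explicit homotopy $(\Delta^1)^\sharp\times X\to X$ between $\mathrm{id}_X$ and $s\circ p$ and appeals to the fact that $X\times\{0\}\hookrightarrow X\times(\Delta^1)^\sharp$ is scaled anodyne, whereas you argue more directly that such a $p$ is a trivial fibration in the ambient scaled model structure, hence a bicategorical equivalence, and then use your earlier observation that $H$ (and thus $G$) preserves bicategorical equivalences---your route is arguably cleaner, since the paper's argument tacitly relies on the same fact (that bicategorical equivalences lie in $W$) without making it explicit.
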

\begin{proof}
The proof is the same as \cite[Proposition 2.1.4.7]{highertopos}. Namely, we have to show that the hypotheses of \cite[Proposition A.2.6.13]{highertopos} are satisfied. By the same argument in \emph{loc. cit.}, the weak equivalences form a perfect class.

Consider a pushout diagram of the form:
\begin{equation*}
\xymatrix{X\ar[r]^p\ar[d]^{i} & Y\ar[d]\\
X^\prime\ar[r]^{p^\prime} & Y^\prime}
\end{equation*}
where $p$ is a weak equivalence and $i$ is a cofibration. We know that $\mathfrak{C}^\mathrm{sc}$ takes such a pushout diagram to a homotopy coCartesian diagram in $\mathrm{Cat}_{\set^+_\Deltab}$ because $\mathrm{Cat}_{\set^+_\Deltab}$ is left proper and \cite[Proposition 3.1.13]{goodwillie} says that $\mathfrak{C}^\mathrm{sc}$ takes monomorphisms to cofibrations.

Proving that any map $p:X\to S$ with the right lifting property with respect to any cofibration is a weak equivalence. As in \cite[Proposition 2.1.4.7]{highertopos}, this reduces to showing that the homotopy $(\Delta^1)^\sharp\times X\to X$ between $\mathrm{id}_X$ and the composition of $p$ with a section is a weak equivalence. Since $h$ has a left inverse, it suffices to prove that the left inverse $X\times\{0\}\subseteq X\times\Delta^1$ is scaled anodyne (because scaled anodyne maps are weak equivalences), which follows from \cite[Proposition 3.1.8]{goodwillie}.
\end{proof}
\begin{lemma}
Every scaled anodyne map and the collection of inclusions $(\Lambda^n_0,(\mathrm{deg}_2(\Delta^n)\cup\{\Delta^{\{0,1\}}\})\cap\mathrm{Map}(\Delta^2,\Lambda^n_0))\hookrightarrow(\Delta^n,(\mathrm{deg}_2(\Delta^n)\cup\{\Delta^{\{0,1\}}\})$ is a trivial cofibration in the above model structure.
\end{lemma}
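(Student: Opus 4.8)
The plan is to reduce both assertions to a single join pushout--product computation. First note that every map in either family is a monomorphism on underlying simplicial sets, hence a cofibration in the model structure of Theorem~\ref{leftmodel}; it therefore remains to prove that each is a weak equivalence. Since the trivial cofibrations of any model category form a weakly saturated class, and since the scaled anodyne maps are the weakly saturated class generated by the three families of the definition, it suffices to treat a single generating inclusion $i\colon A\hookrightarrow B$ --- either one of the three scaled anodyne generators, or one of the outer inclusions $(\Lambda^n_0,\dots)\hookrightarrow(\Delta^n,\dots)$.

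Fix such an $i$ together with a structure map $B\to S$, so that $A\to B$ becomes a map over $S$. Writing $F(X)=X^\vartriangleleft\coprod_X S$ for the functor computing weak equivalences, I would check by the pasting law for pushouts that $F(i)\colon A^\vartriangleleft\coprod_A S\to B^\vartriangleleft\coprod_B S$ is canonically the pushout along $B\to S$ of the ``universal'' map obtained by taking $S=B$, namely
\[
g_i\colon A^\vartriangleleft\coprod_A B\longrightarrow B^\vartriangleleft .
\]
Because pushouts of trivial cofibrations are again trivial cofibrations, it is then enough to prove that $g_i$ is a trivial cofibration in the scaled model structure of Theorem~\ref{scaledmodel}; as $g_i$ is visibly a monomorphism, this amounts to showing that $g_i$ is a bicategorical equivalence. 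Observing that the left cone is the join $(-)^\vartriangleleft=\Delta^0\star(-)$, the map $g_i$ is precisely the join pushout--product of the inclusion $\emptyset\hookrightarrow\Delta^0$ with $i$, so by the Lemma asserting that every scaled anodyne map is a bicategorical equivalence it will suffice to show that each $g_i$ is itself scaled anodyne.

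The heart of the matter is the combinatorial identification of $g_i$. For an inner scaled anodyne horn $(\Lambda^n_j,\dots)\hookrightarrow(\Delta^n,\dots)$ with $0<j<n$, coning off an initial vertex $v$ and gluing back the face opposite $v$ identifies the source $(\Lambda^n_j)^\vartriangleleft\coprod_{\Lambda^n_j}\Delta^n$ with the inner horn $\Lambda^{n+1}_{j+1}\subseteq\Delta^{n+1}$, so that $g_i$ is the inclusion $\Lambda^{n+1}_{j+1}\hookrightarrow\Delta^{n+1}$, with the distinguished thin triangle $\Delta^{\{j-1,j,j+1\}}$ reindexed to $\Delta^{\{j,j+1,j+2\}}$; this is exactly the scaled anodyne generator of type (1) for the index $j+1$. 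The same computation applied to the outer horn $\Lambda^n_0\hookrightarrow\Delta^n$ of the second family produces the inner horn $\Lambda^{n+1}_1\hookrightarrow\Delta^{n+1}$, again of type (1): this is the structural reason that the ``outer'' generators defining left $2$-fibrations become trivial cofibrations. I would handle the generators of types (2) and (3) by the analogous, if more bookkeeping-intensive, join computations.

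The hardest part will be precisely this last combinatorial step: one must verify that the ``obvious'' thin $2$-simplices attached to the cone (via the $\diamond$-construction) and the markings carried by each of the three scaled anodyne families match, after coning, the thin triangles demanded by the scaled anodyne generators --- so that $g_i$ is not merely a horn-type inclusion of underlying simplicial sets but literally a scaled anodyne map. This is the scaled analogue of the join/anodyne compatibility underlying the covariant model structure (cf. the treatment of left-anodyne maps in \cite{highertopos}), and carrying out the scaling bookkeeping for the families (2) and (3) is where the real work lies.
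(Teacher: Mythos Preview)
The paper states this lemma without proof, so there is no argument to compare against directly. Your strategy is the standard one, paralleling the proof in \cite[Proposition~2.1.4.6]{highertopos} that left anodyne maps are trivial cofibrations in the covariant model structure: reduce to generators, pass to the universal case $S=B$, and recognise $g_i$ as the join pushout--product $(\emptyset\hookrightarrow\Delta^0)\mathbin{\hat\star} i$. The identification $(\Lambda^n_j\hookrightarrow\Delta^n)\mapsto(\Lambda^{n+1}_{j+1}\hookrightarrow\Delta^{n+1})$ on underlying simplicial sets is correct, and your observation that the outer horn $\Lambda^n_0$ becomes the inner horn $\Lambda^{n+1}_1$ is exactly the mechanism that should make the extra generators trivial.

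One point deserves more care than you give it. For the outer-horn family, the type-(1) scaled anodyne generator with index $j+1=1$ requires the triangle $\Delta^{\{0,1,2\}}\subseteq\Delta^{n+1}$ to be thin, where $0$ is the new cone vertex. This triangle is \emph{not} the image of a triangle of the original $\Delta^n$; it is the cone on the edge $\Delta^{\{0,1\}}$ of $\Delta^n$ (reindexed to $\Delta^{\{1,2\}}$). Whether it is thin therefore depends entirely on which $2$-simplices the scaled cone $(-)^\vartriangleleft$ declares thin, and the paper's ``obvious thin $2$-simplices'' is not precise enough to settle this. If the cone only inherits thin triangles from $X$ together with degenerates, then $g_i$ is not literally a type-(1) generator and you would need an alternative argument (e.g.\ show directly that $\mathfrak{C}^{\mathrm{sc}}[g_i]$ is an equivalence, or argue via the Gray/alternative join description of $\diamond$ that the relevant cone triangles are thin). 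Your closing remark that the scaling bookkeeping ``is where the real work lies'' is therefore exactly right, and for the outer generators in particular you should make the choice of scaling on $(-)^\vartriangleleft$ explicit before claiming $g_i$ is scaled anodyne.
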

In particular, every fibration in Theorem \ref{leftmodel} is a left $2$-fibration; therefore, we may call this model structure the left model structure. Suppose $p:X\to S$ is a coCartesian $2$-fibration; then if $s\to s^\prime$ is a map in $S$, there is an induced map of $(\infty,2)$-categories $X_{s^\prime}\to X_{s}$.
\begin{theorem}\label{straight}
There is a map of $(\infty,2)$-categories from the $(\infty,2)$-category of coCartesian fibrations over a scaled simplicial set $(S,\Gamma)$ to the $(\infty,2)$-category of functors from $(S,\Gamma)$ to the $(\infty,2)$-category $\mathrm{N}^\mathrm{sc}(\set^\mathrm{sc}_\Deltab)$.
\end{theorem}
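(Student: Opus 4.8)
The plan is to realize both $(\infty,2)$-categories as scaled nerves of $\set^+_\Deltab$-enriched categories and to produce the required map by taking the scaled nerve of a straightening functor, in direct analogy with Lurie's straightening--unstraightening formalism \cite[\S 2.2]{highertopos}. For the target, a functor $(S,\Gamma)\to\mathrm{N}^\mathrm{sc}(\set^\mathrm{sc}_\Deltab)$ is, by the adjunction $\mathfrak{C}^\mathrm{sc}\dashv\mathrm{N}^\mathrm{sc}$ of Theorem \ref{scaledmodel}, the same datum as a $\set^+_\Deltab$-enriched functor $\mathfrak{C}^\mathrm{sc}[(S,\Gamma)]\to\set^\mathrm{sc}_\Deltab$; hence the $(\infty,2)$-category of such functors is modeled by the scaled nerve of the enriched functor category $\mathrm{Fun}(\mathfrak{C}^\mathrm{sc}[(S,\Gamma)],\set^\mathrm{sc}_\Deltab)$, formed exactly as $\mathrm{Cat}_\infty$ was in Example \ref{basiclimits}. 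For the source, I would take the $\set^+_\Deltab$-enriched category whose objects are the fibrant objects of the model structure of Theorem \ref{leftmodel} (the coCartesian $2$-fibrations over $S$) and whose mapping objects are the scaled mapping simplicial sets between them; its scaled nerve is the $(\infty,2)$-category of coCartesian fibrations over $S$.

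First I would construct a scaled straightening functor $St^\mathrm{sc}_S\colon(\set^\mathrm{sc}_\Deltab)_{/S}\to\mathrm{Fun}(\mathfrak{C}^\mathrm{sc}[(S,\Gamma)],\set^\mathrm{sc}_\Deltab)$ by imitating \cite[\S 2.2.1]{highertopos} with $\mathfrak{C}$ replaced throughout by $\mathfrak{C}^\mathrm{sc}$. Concretely, to a map $p\colon X\to S$ one associates the cone $X^\vartriangleleft\coprod_X S$ that already appears in the definition of the weak equivalences of Theorem \ref{leftmodel}, applies $\mathfrak{C}^\mathrm{sc}$, and reads off, for each object $s$ of $\mathfrak{C}^\mathrm{sc}[(S,\Gamma)]$, the scaled mapping simplicial set out of the adjoined cone point; this assignment is functorial in $s$ and yields the value $St^\mathrm{sc}_S(X)$. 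Since $\mathfrak{C}^\mathrm{sc}$ is a left adjoint it preserves the colimits entering this construction, and Proposition \ref{trivialcofib} ensures that $St^\mathrm{sc}_S$ carries cofibrations to cofibrations and scaled anodyne maps to trivial cofibrations. Let $Un^\mathrm{sc}_S$ denote its right adjoint, the scaled unstraightening functor.

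Next I would check that $(St^\mathrm{sc}_S,Un^\mathrm{sc}_S)$ is a Quillen adjunction between the model structure of Theorem \ref{leftmodel} and the projective model structure on $\mathrm{Fun}(\mathfrak{C}^\mathrm{sc}[(S,\Gamma)],\set^\mathrm{sc}_\Deltab)$. The crucial point is that the weak equivalences of Theorem \ref{leftmodel} are \emph{defined} by applying $\mathfrak{C}^\mathrm{sc}$ to $X^\vartriangleleft\coprod_X S$, so they are tautologically reflected by $St^\mathrm{sc}_S$; combined with the cofibration statement above, this exhibits $St^\mathrm{sc}_S$ as left Quillen. The straightening functor is moreover compatible with the $\set^+_\Deltab$-enrichments, so on fibrant objects it refines to a $\set^+_\Deltab$-enriched functor between the two enriched categories; applying the scaled nerve $\mathrm{N}^\mathrm{sc}$, which is right Quillen by Theorem \ref{scaledmodel}, then produces the desired map of $(\infty,2)$-categories. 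Because I only assert the existence of a map and not an equivalence, it is unnecessary to verify that $St^\mathrm{sc}_S$ is a Quillen \emph{equivalence}; this is exactly why the statement is a \emph{weak} analogue of straightening.

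The main obstacle will be controlling the thin $2$-simplices throughout the construction. In the unscaled setting one tracks only underlying simplicial sets, whereas here one must pin down the correct scaling on $St^\mathrm{sc}_S(X)$, so that the functor genuinely lands in $\set^\mathrm{sc}_\Deltab$ and is compatible with $\mathfrak{C}^\mathrm{sc}$. The delicate step is showing that the second and third families of generating scaled anodyne maps---the $\Delta^4$ associativity datum and the $\Lambda^n_0$-type inclusions---are sent to trivial cofibrations, which demands a scaled analogue of the cube- and prism-filtration arguments of \cite[\S 2.2.2]{highertopos}. A secondary difficulty is rigidifying $St^\mathrm{sc}_S$ into a \emph{strict} $\set^+_\Deltab$-enriched functor, so that its scaled nerve is literally defined; this can be arranged by taking the mapping objects on the source side to be precisely the scaled mapping simplicial sets computed by $St^\mathrm{sc}_S$.
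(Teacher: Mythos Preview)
Your approach diverges from the paper's in a way that introduces a type mismatch. You propose to build the straightening by applying $\mathfrak{C}^\mathrm{sc}$ to the cone $X^\vartriangleleft\coprod_X S$ and then reading off ``the scaled mapping simplicial set out of the adjoined cone point.'' But $\mathfrak{C}^\mathrm{sc}$ lands in $\mathrm{Cat}_{\set^+_\Deltab}$: its hom-objects are \emph{marked} simplicial sets, not scaled ones. So your construction produces, for each $s$, a marked simplicial set, and hence a functor to $\set^+_\Deltab$ rather than to $\set^\mathrm{sc}_\Deltab$. The thin $2$-simplices you need on the values $St^\mathrm{sc}_S(X)(s)$ are simply not recorded by $\mathfrak{C}^\mathrm{sc}$; the $\set^+_\Deltab$-enrichment only remembers which $1$-simplices in mapping objects are marked, which is the wrong kind of data. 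This is precisely the difficulty you flag at the end, but your proposed fix---replacing $\mathfrak{C}$ by $\mathfrak{C}^\mathrm{sc}$---does not resolve it.

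The paper takes a different route that sidesteps this problem. It runs the \emph{ordinary} unmarked straightening of \cite[\S 2.2.1]{highertopos} on the underlying simplicial sets, using plain $\mathfrak{C}$, to obtain $\mathrm{St}_\phi(X)(C)\in\set_\Deltab$. It then \emph{imposes} a scaling on $\mathrm{St}_\phi(X)(C)$ by hand: a $2$-simplex $f:\Delta^2\to X$ determines, via the join $f\star\mathrm{id}_{\Delta^0}:\Delta^3\to X^\vartriangleright$, a map $\mathfrak{C}[\Delta^3]\to\cc^{op}_X$, and hence a $2$-simplex of $\mathrm{St}_\phi(X)(C)=\mathrm{Map}_{\cc^{op}_X}(C,\ast)$; one declares the latter thin exactly when the original $2$-simplex of $X$ was thin. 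This explicit transport of the scaling through the cone is the idea missing from your proposal. Your outline of the Quillen-adjunction and scaled-nerve packaging is reasonable once the functor is correctly defined, but without a mechanism for producing the scaling on the values the construction does not get off the ground.
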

\begin{proof}
Purely for notational convenience, we will prove that there is a map of $(\infty,2)$-categories from the $(\infty,2)$-category of coCartesian fibrations over a scaled simplicial set $(S^{op},\Gamma)$ to the $(\infty,2)$-category of functors from $(S^{op},\Gamma)$ to the $(\infty,2)$-category $\mathrm{N}^\mathrm{sc}(\set^\mathrm{sc}_\Deltab)$. Recall the ordinary straightening construction (in the unmarked case): Let $X$ be an object of $(\set_\Deltab)_{/S}$. Let $\cc$ be the simplicial category $\mathfrak{C}[S]^{op}$. We may then form another simplicial category, $\cc^{op}_X$, via the pushout:
\begin{equation*}
\xymatrix{
\mathfrak{C}[X]\ar[r]\ar[d] & \mathfrak{C}[X^\vartriangleright]\ar[d]\\
\cc^{op}\ar[r] & \cc^{op}_X
}
\end{equation*}
The map $\mathfrak{C}[X]\to \cc^{op}$ is denoted $\phi$. We can then define a functor $\mathrm{St}_\phi(X):\cc\to\set_\Deltab$ as follows: $\mathrm{St}_\phi(X)(C)=\mathrm{Map}_{\cc^{op}_X}(C,\ast)$ where $\ast$ is the cone point of $X^\vartriangleright$. This is functorial in $X$; therefore we may think of $\mathrm{St}_\phi$ as a functor $(\set_\Deltab)_{/S}\to(\set_\Deltab)^\cc$. Let $(X,\Gamma)$ be an object of $(\set^\mathrm{sc}_\Deltab)_{/S}$. Suppose $f:\Delta^2\to X$ is a $2$-simplex of $X$, and consider the induced map $f\star\mathrm{id}_{\Delta^0}:\Delta^3\to X^\vartriangleright$. This determines a map $\mathfrak{C}[\Delta^3]\to \cc_X$; explicitly, this construction is described as follows. Suppose we have a $2$-simplex of $X$:
\begin{equation*}
\xymatrix@C=1em@R=1em{
c \ar[dr]\ar[rr]& & \ar[dl]d\\
 & e &
}
\end{equation*}
Obviously, this corresponds to a $2$-simplex in $\cc^{op}$ under the map $\phi$. Taking the join with $\Delta^0$ gives us a $3$-simplex, which can be identified with a diagram:
\begin{equation*}
\xymatrix@C=1em@R=1em{
\phi(c) \ar[ddr]\ar[dr]\ar[rr]& & \ar[dl]\ar[ddl]\phi(d)\\
 & \ast &\\
 & \phi(e)\ar[u] &
}
\end{equation*}
Where $\ast$ is the cone point of $X^\vartriangleright$. This induces a $2$-simplex in $\mathrm{St}_\phi(X)(C)$; now one can say that a $2$-simplex in $\mathrm{St}_\phi(X)(C)$ is thin if it can be constructed in the above fashion from a thin $2$-simplex of $X$. The collection of thin $2$-simplices of $\mathrm{St}_\phi(X)(C)$ is denoted $\Gamma_\phi(C)$. This refines $\mathrm{St}_\phi$ to a functor $\mathrm{St}^\mathrm{sc}_\phi(X,\Gamma)(C) = (\mathrm{St}_\phi(X)(C),\Gamma_\phi(C))$; this is the desired functor.
\end{proof}
\begin{remark}
The functor $\mathrm{St}^\mathrm{sc}_\phi$ admits a right adjoint, denoted $\mathrm{Un}^\mathrm{sc}_\phi$.
\end{remark}
\section{$(\infty,2)$-operads}
\subsection{$\infty$-operads}
In this subsection, we will recall the theory of complete Segal operads via operator categories as defined by Barwick in \cite{operatorcat}. For the whole section, we will use the notation employed in \cite{operatorcat}, and we will also use $\mathrm{N}(\cc)$ to denote $\mathrm{N}^\mathrm{sc}(\cc)$.
\begin{definition}
An operator category $\Phi$ is a category that has a terminal object such that for every $X,Y\in\Phi$, the set $\mathrm{Map}_\Phi(X,Y)$ is finite, and for every $i:1\to K$ denoted as $\{i\}\subseteq K$ and every map $I\to K$, there is a fiber $I_i = I\times_K\{i\}$.
\end{definition}
The set $\mathrm{Map}_\Phi(1,K)$ is denoted $|K|$.
\begin{example}
Important examples include the trivial category $\{1\}$, the category of finite sets, $\cf$, and the category of ordered finite sets $\calO$. Given two operator categories $\Phi$ and $\Psi$, we can construct another operator category $\Phi\wr\Psi$ whose objects are pairs $(X,Y)$ with $X\in\Phi$ and $Y=\{Y_i\}_{i\in\mathrm{Map}_\Phi(1,X)}$ a collection of objects in $\Psi$, and morphisms $(X,Y)\to(X^\prime,Y^\prime)$ given by maps $f:X\to X^\prime$ and a collection of maps $\{Y_\tau\to Y^\prime_{f(\tau)}\}_{\tau\in\mathrm{Map}_\Phi(1,X)}$. This is called the wreath product of operator categories.
\end{example}
\begin{definition}
A map of operator categories is a functor $F:\Phi\to\Psi$ that preserves terminal objects and the formation of fibers such that for any $X\in\Phi$, the map $\mathrm{Map}_\Phi(1,X)\to\mathrm{Map}_\Psi(1,F(X))$ is a surjection.
\end{definition}
\begin{lemma}
The map $\mathrm{Map}_\Phi(1,X)\to\mathrm{Map}_\Psi(1,F(X))$ is a bijection.
\end{lemma}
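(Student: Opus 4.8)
Since the hypothesis already supplies surjectivity of the map $\theta_X\colon\mathrm{Map}_\Phi(1,X)\to\mathrm{Map}_\Psi(1,F(X))$, $s\mapsto F(s)$ (using $F(1)=1$), the plan is to prove that $\theta_X$ is \emph{injective}; combined with surjectivity this yields the asserted bijection. The key principle I would isolate first is that the underlying-set functor $\mathrm{Map}_\Phi(1,-)$ converts fibers over points into set-theoretic fibers: for a map $I\to K$ and a point $i\colon 1\to K$, the universal property of the pullback $I_i=I\times_K\{i\}$ together with the terminality of $1$ identifies $\mathrm{Map}_\Phi(1,I_i)$ with the fiber of $\mathrm{Map}_\Phi(1,I)\to\mathrm{Map}_\Phi(1,K)$ over $i$. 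The identical statement holds in $\Psi$.

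For the main construction, suppose $s,t\in\mathrm{Map}_\Phi(1,X)$ satisfy $F(s)=F(t)$; I want to conclude $s=t$. I would apply the fiber axiom with $K=X$, with $I=1$ equipped with the structure map $t\colon 1\to X$, and with the point $i=s$, obtaining $E:=1\times_X\{s\}$. By the principle above, $\mathrm{Map}_\Phi(1,E)$ is the fiber of the map $\{\ast\}=\mathrm{Map}_\Phi(1,1)\to\mathrm{Map}_\Phi(1,X)$, $\ast\mapsto t$, over the point $s$; hence $\mathrm{Map}_\Phi(1,E)$ is empty precisely when $t\neq s$. So if $s\neq t$, then $E$ is an object with no points.

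Now I transport $E$ through $F$. Because $F$ preserves terminal objects and the formation of fibers, $F(E)\cong F(1)\times_{F(X)}\{F(s)\}=1\times_{F(X)}\{F(s)\}$, where the structure map $1\to F(X)$ is $F(t)$. Applying the $\Psi$-version of the principle, $\mathrm{Map}_\Psi(1,F(E))$ is the fiber of $\{\ast\}\to\mathrm{Map}_\Psi(1,F(X))$, $\ast\mapsto F(t)$, over $F(s)$; since $F(s)=F(t)$ by hypothesis, this fiber is the single point $\ast$, so $\mathrm{Map}_\Psi(1,F(E))$ is nonempty. But the surjectivity hypothesis applied to the object $E$ furnishes a surjection $\mathrm{Map}_\Phi(1,E)\to\mathrm{Map}_\Psi(1,F(E))$, which is impossible when the source is empty and the target is not. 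This contradiction forces $s=t$, proving injectivity, and hence $\theta_X$ is a bijection.

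The step I expect to be the main (and essentially the only) subtlety is the bookkeeping in the identification of $\mathrm{Map}_\Phi(1,I_i)$ with the fiber of the induced map of point-sets: one must verify that the ``empty'' fiber produced when $t\neq s$ is a genuine object of $\Phi$ (guaranteed by the fiber axiom, which demands $I_i$ for \emph{every} map $I\to K$ and every $i$, not merely those with nonempty fiber), and that $F$'s preservation of fibers is compatible with structure maps, so that the map $1\to F(X)$ defining $F(E)$ is indeed $F(t)$ and not some other edge. Once this compatibility is recorded, the contradiction with surjectivity is immediate, and no counting argument or further structural input about $\Phi$ or $\Psi$ is required.
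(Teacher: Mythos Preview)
Your proof is correct and follows the same core idea as the paper's: deduce injectivity from the fact that $F$ preserves fibers, by examining the fiber of one point $t:1\to X$ over another point $s$. Your argument is considerably more explicit than the paper's one-line sketch, and in particular you spell out the second invocation of the surjectivity hypothesis (applied to the fiber object $E$ rather than to $X$), a step the paper's phrasing ``because $F$ preserves fibers, and there are $|\mathrm{Map}_\Phi(1,X)|$ fibers of a map into $X$'' leaves implicit.
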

\begin{proof}
This follows from there being an injection $\mathrm{Map}_\Phi(1,X)\to\mathrm{Map}_\Psi(1,F(X))$ because $F$ preserves fibers, and there are $|\mathrm{Map}_\Phi(1,X)|$ fibers of a map into $X$.
\end{proof}
As a consequence, $\cf$ is a terminal object, and $\{1\}$ is an initial object, in the $\infty$-category of operator categories.
\begin{definition}
A map $p:K\to J$ of operator categories is a fiber inclusion if there is a map $q:J\to I$ such that $I$ is the pullback $J\times_K\{i\}$. $p$ is an interval inclusion if it can be written as a composition of fiber inclusions.
\end{definition}
\begin{example}
Every monomorphism of finite sets is a fiber inclusion and conversely, i.e., fiber and interval inclusions coincide in $\cf$.
\end{example}
\begin{definition}
A $\Phi$-sequence is a pair $\mathbf{n}\in\Deltab$ and a functor $I:\mathbf{n}\to\Phi$, denoted by $[I_0\to\cdots\to I_n]$. A map of $\Phi$-sequences is a pair of maps map $(p,q):(\mathbf{n},I)\to(\mathbf{m},J)$ where $q$ is a natural transformation $q:I\to J\circ p$ such that for any $0\leq k\leq m$, the map $q_k:I_k\to J_{p(k)}$ is an interval inclusion, and for any $0\leq \sigma\leq \tau\leq m$, $J_{p(\tau)}\simeq J_{p(\sigma)}\times_{I_\sigma}I_\tau$. These form a category, denoted $\Deltab_\Phi$.
\end{definition}
\begin{definition}
Suppose $\mathbf{X}:\Deltab_\Phi^{op}\to\mathbf{Kan}$ is a functor. Define:
\begin{equation*}
p_{(\mathbf{n},I)}:\mathbf{X}[I_0\to I_1\to \cdots\to I_n]\to 
\prod_{t\in|I_n|} \mathbf{X}[I_{0,t}\to
I_{1,t}\to\cdots
\to I_{n,t}]
\end{equation*}
and:
\begin{equation*}
s_{(\mathbf{n},I)}:\mathbf{X}[I_0\to I_1\to \cdots\to I_n]\xrightarrow{}
\mathbf{X}[I_0\to I_1] \times_{\mathbf{X}[I_1]}\cdots\times_{\mathbf{X}[I_{n-1}]}
\mathbf{X}[I_{n-1}\to I_n]
\end{equation*}
Construct another map $r$ as follows. There is an inclusion $\Deltab\hookrightarrow\Deltab_{\Phi}$, so $\mathbf{X}$ restricts to a functor $\mathbf{X}|_{\Deltab^{op}}:\Deltab^{op}:(\mathrm{Set}^\mathrm{sc}_\Deltab)^\circ$. Denote by $K$ the simplicial set $\Delta^{3}/(\Delta^{\{0,2\}}\sqcup\Delta^{\{1,3\}})$. Let $\mathbf{X}|_{\Deltab^{op}_{K}}$ denote the homotopy limit of the diagram $\Deltab^{op}_{K}\to\Deltab^{op}\to\set^\mathrm{sc}_\Deltab$. Denote by $r$ the map $\mathbf{X}|_{\Deltab^{op}}\to \mathbf{X}|_{\Deltab^{op}_{K}}$.
\end{definition}
We can now finally define complete Segal $\Phi$-operads.
\begin{definition}
If $\Phi$ is an operator category, define a complete Segal $\Phi$-operad to be a left fibration $p:\mathbf{X}\to\mathrm{N}(\Deltab_\Phi)^{op}$ such that a functor $\Deltab_\Phi^{op}\to\mathbf{Kan}$ classifying $p$ has all the three maps $p_{(\mathbf{n},I)}$, $s_{(\mathbf{n},I)}$, and $r$ defined above as equivalences. A map of complete Segal $\Phi$-operads is a map in the overcategory $(\set_\Deltab)_{/\mathrm{N}(\Deltab_\Phi)^{op}}$.
\end{definition}
\begin{remark}
The fiber of the map $\mathbf{X}[I_\bullet] = \mathbf{X}[I_0\to I_1\to\cdots\to I_n]\to \mathbf{X}[I_0]\times\cdots\times \mathbf{X}[I_n]$ over the vertex $((x^0_{i_0})_{i_0\in |I_0|},\cdots,(x^n_{i_n})_{i_n\in |I_n|})$ is denoted $\mathrm{Map}_{\mathbf{X}^\otimes}^{[I_\bullet]}((x^0_{i_0})_{i_0\in |I_0|},\cdots,(x^n_{i_n})_{i_n\in |I_n|})$. Suppose $n=1$, and $I_n=\langle 1\rangle$; then $\mathrm{Map}_{\mathbf{X}^\otimes}^{[I_0\to I_1]}((x^0_{i_0})_{i_0\in |I_0|},y)$ can be thought of as the polycomposition maps from $(x^0_{i_0})_{i_0\in |I_0|}$ to $y$.
\end{remark}
The results we will state here are the following.
\begin{theorem}
Bousfield localizing the covariant model structure gives a left proper simplicial model structure on $(\set_\Deltab)_{/\mathrm{N}(\Deltab_\Phi)^{op}}$ such that:
\begin{itemize}
\item A map $X\to Y$ is a cofibration if it is a monomorphism.
\item A map $X\to Y$ of simplicial sets over $\mathrm{N}(\Deltab_\Phi)^{op}$ is a weak equivalence if for any complete Segal $\Phi$-operad $C$, the map $\mathrm{Map}_{\mathrm{N}(\Deltab_\Phi)^{op}}(Y,C)\to\mathrm{Map}_{\mathrm{N}(\Deltab_\Phi)^{op}}(X,C)$ is a weak equivalence.
\item An object is fibrant precisely if it is a complete Segal $\Phi$-operad.
\end{itemize}
In addition, if $F:\Phi\to\Psi$ is a map of operator categories, there is an adjunction $F_{!}:(\set_\Deltab)_{/\mathrm{N}(\Deltab_\Phi)^{op}}\leftrightarrow (\set_\Deltab)_{/\mathrm{N}(\Deltab_\Psi)^{op}}:F_{\star}$.
\end{theorem}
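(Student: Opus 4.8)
The plan is to obtain this model structure as a left Bousfield localization of the covariant model structure on $(\set_\Deltab)_{/S}$, where $S=\mathrm{N}(\Deltab_\Phi)^{op}$. First I would recall that this covariant model structure exists and is left proper, combinatorial, and simplicial, with cofibrations the monomorphisms and fibrant objects the left fibrations over $S$; this is \cite[Proposition 2.1.4.7]{highertopos}. Since the left Bousfield localization of a left proper combinatorial simplicial model category exists and retains left properness, combinatoriality, and the simplicial enrichment (\cite[Proposition A.3.7.3]{highertopos}), it suffices to exhibit a \emph{set} $A$ of morphisms in $(\set_\Deltab)_{/S}$ whose local fibrant objects are precisely the complete Segal $\Phi$-operads. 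The localization $L_A$ then automatically records the first and third bullet points, while the middle bullet is the general description of $A$-local equivalences.

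Second, I would assemble $A$ from three families, one for each of the conditions that $p_{(\mathbf{n},I)}$, $s_{(\mathbf{n},I)}$, and $r$ be equivalences. Writing $\sigma_{(\mathbf{n},I)}\colon\Delta^n\to S$ for the simplex of $S$ corresponding to a $\Phi$-sequence $[I_0\to\cdots\to I_n]$, the Segal family consists of the spine inclusions $\mathrm{Sp}^n\hookrightarrow\Delta^n$ pushed into the slice along $\sigma_{(\mathbf{n},I)}$, which force $s_{(\mathbf{n},I)}$ to be an equivalence; the fiber-decomposition family consists of the inclusions into $\Delta^n$ of the coproduct over $t\in|I_n|$ of the simplices corresponding to the fiberwise sequences $[I_{0,t}\to\cdots\to I_{n,t}]$, which force $p_{(\mathbf{n},I)}$ to be an equivalence; and the completeness family is built from the simplicial set $K=\Delta^3/(\Delta^{\{0,2\}}\sqcup\Delta^{\{1,3\}})$ exactly as in \cite{operatorcat}, forcing $r$ to be an equivalence. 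Because $\Deltab_\Phi$ is essentially small, each family is a set, so $A$ is a set and the localization exists.

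Third --- and this is the crux --- I would show that a left fibration $\mathbf{X}\to S$ is $A$-local if and only if its straightening sends all three of the above maps to equivalences. Here I would use straightening/unstraightening over $S$ (the unmarked predecessor of Theorem \ref{straight}) to translate the derived mapping-space condition $\Map_S(a',\mathbf{X})\xrightarrow{\sim}\Map_S(a,\mathbf{X})$ for each $a\to a'$ in $A$ into the stated equivalences of homotopy limits of the classifying functor. The Segal and fiber-decomposition cases are a direct computation of mapping spaces out of representables; the subtle point is the completeness map, where unwinding $\Map_S(-,\mathbf{X})$ against the quotient $K$ must reproduce exactly the map $r\colon\mathbf{X}|_{\Deltab^{op}}\to\mathbf{X}|_{\Deltab^{op}_K}$ of the definition. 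This identification is the main obstacle and the place I expect to work hardest, following Barwick's treatment \cite{operatorcat} closely. Granting it, the fibrant objects of $L_A$ are exactly the complete Segal $\Phi$-operads, and the middle bullet is then the standard description of $A$-local equivalences as the maps inducing equivalences on $\Map_S(-,C)$ for every local fibrant $C$.

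Finally, for the adjunction, a map of operator categories $F\colon\Phi\to\Psi$ induces a functor $\Deltab_\Phi\to\Deltab_\Psi$ --- using that $F$ preserves terminal objects and fibers, hence interval inclusions, together with the bijection $|X|\cong|F(X)|$ proved above --- and therefore a map of simplicial sets $\mathrm{N}(\Deltab_\Phi)^{op}\to\mathrm{N}(\Deltab_\Psi)^{op}$. Base change along this map gives the adjunction $F_!\dashv F_\star$ between the overcategories, with $F_!$ post-composition and $F_\star$ pullback. I would then verify that $F_!$ is left Quillen for the localized structures: it preserves monomorphisms trivially, and because $F$ carries fibers and interval inclusions to fibers and interval inclusions it sends each generating map of $A$ to a map of the same three types over $\mathrm{N}(\Deltab_\Psi)^{op}$, hence sends $L_A$-local equivalences to $L_{A'}$-local equivalences. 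This yields the desired (Quillen) adjunction.
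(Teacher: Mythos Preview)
The paper does not actually prove this theorem: it appears in the subsection that explicitly ``recalls the theory of complete Segal operads via operator categories as defined by Barwick in \cite{operatorcat}'', is introduced by the sentence ``The results we will state here are the following'', and is followed immediately by a remark with no intervening proof environment. So there is no proof in the paper to compare your proposal against.

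That said, your outline is the standard and correct strategy, and it is essentially what Barwick carries out in \cite{operatorcat}: start from the covariant model structure on $(\set_\Deltab)_{/\mathrm{N}(\Deltab_\Phi)^{op}}$, Bousfield localize at an explicit set of maps encoding the three conditions on $p_{(\mathbf{n},I)}$, $s_{(\mathbf{n},I)}$, and $r$, identify the local objects via straightening, and obtain $F_!\dashv F_\star$ from the functor $\Deltab_\Phi\to\Deltab_\Psi$ induced by $F$. Your identification of the completeness condition as the delicate step is accurate. One small point: for the final Quillen adjunction claim you should be a bit more careful than ``$F_!$ sends each generating map of $A$ to a map of the same type''; what you actually need is that $F_!$ sends each such map to an $A'$-local equivalence, which may require a short argument rather than landing literally in $A'$.
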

\begin{remark}\label{straightening}
There is a straightened version of this model structure. Namely, there is a left proper model structure on $\mathrm{Fun}(\Deltab_\Phi^{op},\set_\Deltab)$ such that natural transformations are cofibrations if they are monomorphisms, fibrant objects are those $\mathbf{Kan}$-valued maps which classifying complete Segal $\Phi$-operads, weak equivalences are natural transformations $X\to Y$ such that for any fibrant object $Z$, the induced map $\mathrm{Map}(Y,Z)\to \mathrm{Map}(X,Z)$ is a weak equivalence. There is a Quillen equivalence $\displaystyle (\set_\Deltab)_{/\mathrm{N}(\Deltab_\Phi)^{op}}\simeq_\mathrm{Q}\mathrm{Fun}(\Deltab_\Phi^{op},\set_\Deltab)$.
\end{remark}
If $\Psi = \cf$, the map $F_{!}$ takes the terminal $\Phi$-operad to its symmetrization.
\begin{theorem}
The symmetrization of the terminal $\calO^{\wr n}$-operad is the $\mathbf{E}_n$-operad. The symmetrization of the terminal $\cf$-operad, i.e., the terminal $\cf$-operad, is the $\mathbf{E}_\infty$-operad.
\end{theorem}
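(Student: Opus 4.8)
The plan is to treat the $\cf$ assertion directly and then establish the $\calO^{\wr n}$ assertion by induction on $n$, reducing the wreath product to the Boardman--Vogt tensor product under symmetrization and invoking Dunn additivity at the end. Throughout, write $\mathbf{1}_\Phi$ for the terminal complete Segal $\Phi$-operad; since products of contractible spaces are contractible, $\mathbf{1}_\Phi$ is the operad all of whose polycomposition spaces $\mathrm{Map}_{\mathbf{X}^\otimes}^{[I_\bullet]}$ are contractible. Because $\cf$ is terminal in the $\infty$-category of operator categories, the symmetrization attached to $\Phi=\cf$ is induced by $\mathrm{id}_\cf$, so $F_!$ is equivalent to the identity and the symmetrization of $\mathbf{1}_\cf$ is $\mathbf{1}_\cf$ itself. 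Finally, a complete Segal $\cf$-operad with all polycomposition spaces contractible is exactly the commutative operad, which is $\mathbf{E}_\infty$; this settles the second assertion.

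For the base case $n=1$, Barwick's identification of complete Segal $\calO$-operads with non-symmetric $\infty$-operads shows that $\mathbf{1}_\calO$ is the terminal non-symmetric operad, whose $k$-ary space is contractible. Via the straightened description of Remark \ref{straightening}, $F_!$ is the derived left Kan extension along $\Deltab_\calO^{op}\to\Deltab_\cf^{op}$, which on operads is the free symmetrization sending a non-symmetric operad $P$ to the symmetric operad with $k$-ary space $\Sigma_k\times P(k)$ and free $\Sigma_k$-action. Applied to $\mathbf{1}_\calO$ this yields $\Sigma_k$ in arity $k$, i.e., the associative operad $\mathbf{Assoc}$, and $\mathbf{Assoc}\simeq\mathbf{E}_1$.

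The inductive step uses the associativity of the wreath product, $\calO^{\wr n}\simeq\calO\wr\calO^{\wr(n-1)}$, together with the key claim that symmetrization carries wreath products of terminal operads to Boardman--Vogt tensor products:
\begin{equation*}
F_!\,\mathbf{1}_{\Phi\wr\Psi}\;\simeq\;(F_!\,\mathbf{1}_\Phi)\otimes(F_!\,\mathbf{1}_\Psi).
\end{equation*}
Granting this, induction yields $F_!\mathbf{1}_{\calO^{\wr n}}\simeq\mathbf{E}_1^{\otimes n}$, and Dunn additivity (the Boardman--Vogt equivalence $\mathbf{E}_1^{\otimes n}\simeq\mathbf{E}_n$) completes the proof. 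To prove the displayed claim I would unpack the combinatorics of $\Deltab_{\Phi\wr\Psi}$: a $\Phi\wr\Psi$-sequence consists of a $\Phi$-sequence together with a compatible family of $\Psi$-sequences indexed by the fibers, exhibiting $\Deltab_{\Phi\wr\Psi}$ as assembled from $\Deltab_\Phi$ and $\Deltab_\Psi$ compatibly with the forgetful functors to $\Deltab_\cf$. Pushing the constant contractible diagrams through the corresponding left Kan extensions then produces, levelwise, the coend that computes the Boardman--Vogt tensor of the two symmetric operads.

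The main obstacle is this last step: the monoidality of symmetrization with respect to the wreath product. The difficulty is bookkeeping---verifying that the nested fiber data defining $\Phi\wr\Psi$ maps, after $F_!$, precisely onto the coend defining the Boardman--Vogt tensor, and that the Segal and completeness conditions are preserved under this comparison. I expect the cleanest route is to check the identification levelwise on polycomposition spaces using the explicit colimit formula for $F_!$, and to confirm operadic coherence by tracing the symmetric-group actions that arise from the nested orderings, so that the free $\Sigma_k$-actions assemble correctly across the tensor factors.
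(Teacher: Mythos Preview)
The paper does not give its own proof of this theorem: it appears in the review subsection on Barwick's complete Segal operads, introduced by ``The results we will state here are the following,'' and is quoted without argument from \cite{operatorcat}. So there is no in-paper proof to compare against; the relevant comparison is with Barwick's argument.

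Your outline is the right one and matches Barwick's strategy: identify the symmetrization of the terminal $\Phi\wr\Psi$-operad with the Boardman--Vogt tensor product of the symmetrizations, and then invoke Dunn additivity to obtain $\mathbf{E}_1^{\otimes n}\simeq\mathbf{E}_n$. The $\cf$ case and the $n=1$ base case are handled correctly.

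The gap you flag is genuine and is the substantive content of the theorem. Two points to sharpen. First, the displayed monoidality claim is not a formal consequence of left Kan extension along $\Deltab_{\Phi\wr\Psi}^{op}\to\Deltab_\cf^{op}$: one must actually compare the colimit computing $F_!$ on the terminal diagram with the coend defining $\otimes_{\mathrm{BV}}$, and this uses specific features of how $\Deltab_{\Phi\wr\Psi}$ decomposes over $\Deltab_\Phi$ and $\Deltab_\Psi$ (in Barwick's paper this is packaged via the assembly/disassembly functors for wreath products of perfect operator categories). Your sketch of ``pushing constant contractible diagrams through'' is not yet an argument; you would need to exhibit a cofinal functor or an explicit decomposition of the indexing category. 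Second, even granting the monoidality, Dunn additivity for $\infty$-operads is itself a nontrivial input (Lurie, \emph{Higher Algebra}, \S 5.1.2); you should cite it rather than treat it as immediate.
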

\begin{theorem}
For ``perfect'' operator categories $\Phi$, there is an equivalence of $\infty$-categories between complete Segal $\Phi$-operads and quasioperads over $\Phi$.
\end{theorem}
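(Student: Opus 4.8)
The plan is to realize the asserted equivalence as a Quillen equivalence between two localized model structures, following the template of Joyal--Tierney's comparison of complete Segal spaces with quasicategories. On one side sits the localized covariant model structure on $(\set_\Deltab)_{/\mathrm{N}(\Deltab_\Phi)^{op}}$ whose fibrant objects are exactly the complete Segal $\Phi$-operads, constructed in the theorem above; by Remark \ref{straightening} this is Quillen equivalent to the straightened model structure on $\mathrm{Fun}(\Deltab_\Phi^{op},\set_\Deltab)$, so I am free to work with $\mathbf{Kan}$-valued functors satisfying the Segal conditions $p_{(\mathbf{n},I)}$ and $s_{(\mathbf{n},I)}$ together with the completeness condition $r$. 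On the other side, a quasioperad over $\Phi$ is, in the style of Lurie's $\infty$-operads over $\mathrm{N}(\cf_\ast)$, a suitable inner fibration over a nerve $\mathrm{N}(\Phi^\dagger)$ of a pointed thickening of $\Phi$, required to admit coCartesian lifts of the fiber inclusions and to satisfy the operadic Segal condition expressing $n$-ary mapping objects as products over $|I_n|$ of the fibers.

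First I would construct the comparison adjunction. The category $\Deltab_\Phi$ of $\Phi$-sequences is assembled from chains of interval inclusions, and it maps to $\Phi^\dagger$ by a functor recording the underlying flag of fibers; this induces an adjoint pair between the two presheaf categories by left Kan extension and restriction. Composing with the straightening/unstraightening functors of Remark \ref{straightening} and with a singular-complex/realization pair converting a Segal-type bisimplicial object into a simplicial set over $\mathrm{N}(\Phi^\dagger)$, I obtain an adjunction between complete Segal $\Phi$-operads and quasioperads over $\Phi$. I would then check that the left adjoint is left Quillen for the localized structures: since left properness holds and cofibrations are monomorphisms on the underlying simplicial sets, it suffices that the generating trivial cofibrations — the Segal maps and the completeness map built from $K=\Delta^3/(\Delta^{\{0,2\}}\sqcup\Delta^{\{1,3\}})$ — are sent to weak equivalences, which follows from compatibility of the index-category functor with fiber inclusions.

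The main work is to promote this to a Quillen equivalence, which reduces to comparing fibrant objects. Here I would show that a $\mathbf{Kan}$-valued functor on $\Deltab_\Phi^{op}$ satisfies the Segal conditions $p_{(\mathbf{n},I)}$ and $s_{(\mathbf{n},I)}$ exactly when its image is an inner fibration admitting the required coCartesian lifts of fiber inclusions, and that the completeness map $r$ corresponds precisely to the univalence built into a quasioperad. This last correspondence is the crux: just as completeness of a Segal space is equivalent to the associated quasicategory being genuine, invertibility of $r$ must translate into the completeness condition of the quasioperad, after which one verifies that the derived unit and counit are weak equivalences on fibrant--cofibrant objects and concludes by the recognition criterion \cite[Proposition A.2.6.13]{highertopos}.

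The perfectness hypothesis enters precisely here, and this is where I expect the principal obstacle to lie. Perfectness of $\Phi$ is what guarantees that the wreath-product fiber structure defining maps of $\Phi$-sequences — the condition $J_{p(\tau)}\simeq J_{p(\sigma)}\times_{I_\sigma}I_\tau$ — is faithfully represented by the morphisms of $\Phi^\dagger$, so that the product decompositions forced by $p_{(\mathbf{n},I)}$ match the fiberwise decompositions of the quasioperad. Without perfectness the index-category functor need not be fully faithful on the relevant subcategory and the two completeness conditions can diverge; controlling this comparison, and in particular establishing that the $r$-completeness condition and the quasioperadic univalence agree term by term, is the technically demanding step on which the whole equivalence rests.
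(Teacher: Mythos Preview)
The paper does not prove this theorem. The statement appears in a subsection whose explicit purpose is to \emph{recall} Barwick's theory of complete Segal operads; the author prefaces it with ``The results we will state here are the following'' and provides no argument. The theorem is a citation of a result from \cite{operatorcat}, not something established in this paper.

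Consequently there is no paper proof to compare your proposal against. Your sketch is a reasonable outline of a Joyal--Tierney style comparison, and indeed Barwick's actual argument in \cite{operatorcat} proceeds along broadly similar lines (constructing a Quillen adjunction between the operadic model structure on $(\set_\Deltab)_{/\mathrm{N}(\Deltab_\Phi)^{op}}$ and a model structure whose fibrant objects are the $\Phi$-quasioperads, then verifying it is a Quillen equivalence). But you should be aware that perfectness in Barwick's sense is a specific structural hypothesis---the existence of a point classifier and a well-behaved ``Leinster'' category $\Lambda(\Phi)$---and it is used not merely to make an index-category functor fully faithful but to build the very category over which quasioperads live. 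Your description of where perfectness enters is therefore imprecise: it is needed to \emph{define} the target model category, not only to compare completeness conditions.

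In short: there is nothing in the present paper to review here beyond the bare statement, and if you want to check your argument you should compare it directly with \cite{operatorcat}.
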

\begin{remark}\label{correspondence}
The most important case is when $\Phi=\cf$, in which case complete Segal $\cf$-operads correspond to Lurie's $\infty$-operads.
\end{remark}
\begin{remark}
Remark \ref{straightening} and Remark \ref{correspondence} give a version of straightening for Lurie's $\infty$-operads.
\end{remark}
\begin{example}
The following examples demonstrate the importance and generality of complete Segal $\Phi$-operads.
\begin{enumerate}
\item When $ \Phi=\{1\}$, the definition of a complete Segal $\Phi$-operad reduces to the definition of a complete Segal space.
\item When $ \Phi=\calO$, the definition of a complete Segal $\Phi$-operad reduces to the definition of a nonsymmetric $ \infty$-operad.
\item When $ \Phi=\cf$, the definition of a complete Segal $\Phi$-operad reduces to the definition of an $ \infty$-operad.
\end{enumerate}
\end{example}
\subsection{$2$-fold complete Segal operads}
Consider the $ (\infty,2)$-category of stable $\infty$-categories. This should be a symmetric monoidal $ (\infty,2)$-category with the tensor product of stable $ \infty$-categories. It makes sense to ask for a more general notion of a symmetric monoidal $(\infty,2)$-category, namely the theory of ``$ (\infty,2)$-operads''.
\begin{construction}
Let $\Phi$ be an operator category. Define the category of $(\Phi,2)$-sequences $\Deltab_\Phi\boxtimes\Deltab_\Phi$ as the category whose objects are pairs $((\mathbf{n},I:\mathbf{n}\to\Phi),\{([|I_k|],J^k:[|I_k|]\to\Phi)\}_{k\in\mathbf{n}})$ and a morphism $((\mathbf{n},I:\mathbf{n}\to\Phi),\{([|I_k|],J^k:[|I_k|]\to\Phi)\}_{k\in\mathbf{n}})\to ((\mathbf{m},I:\mathbf{m}\to\Phi),\{([|I_g|],J^g:[|I_g|]\to\Phi)\}_{g\in\mathbf{m}})$ are maps of the corresponding $\Phi$-sequences.
\end{construction}
\begin{remark}
A $(\Phi,2)$-sequence is to be thought of as a sequence $[I_0\xrightarrow{J^0_0\to\cdots\to J^0_m}I_1\xrightarrow{J^1_0\to\cdots\to J^1_\ell}\cdots\xrightarrow{J^{n-1}_0\to\cdots\to J^{n-1}_{m^\prime}}I_n]$.
\end{remark}
\begin{construction}
Let $\mathbf{X}:(\Deltab_\Phi\boxtimes\Deltab_\Phi)^{op}\to\set^\mathrm{sc}_\Deltab$ be a functor, and let $\overline{\mathbf{X}}$ denote the restriction of $\mathbf{X}$ to the second factor. Define the map $p_{s,I}$ as:
\begin{multline*}
\mathbf{X}[I_0\xrightarrow{J^0_0\to\cdots\to J^0_m}I_1\xrightarrow{J^1_0\to\cdots\to J^1_\ell}\cdots\xrightarrow{J^{n-1}_0\to\cdots\to J^{n-1}_{m^\prime}}I_n]\to 
\prod_{t\in|I_n|} \mathbf{X}[I_{0,t}\xrightarrow{\{t\}\times\prod_{i\in|J^0_m|}\overline{\mathbf{X}}[J^0_{0,i}\to\cdots\to J^0_{m,i}]}\\
I_{1,t}\xrightarrow{\{t\}\times\prod_{q\in|J^1_\ell|}\overline{\mathbf{X}}[J^1_{0,q}\to\cdots \to J^1_{\ell,q}]}\cdots
\xrightarrow{\{t\}\times\prod_{k\in|J^{n-1}_{m^\prime}|}\overline{\mathbf{X}}[J^{n-1}_{0,k}\to\cdots\to J^{n-1}_{m^\prime,k}]}I_{n,t}]
\end{multline*}
\end{construction}
\begin{construction}
As above, let $\mathbf{X}:(\Deltab_\Phi\boxtimes\Deltab_\Phi)^{op}\to\set^\mathrm{sc}_\Deltab$ be a functor, and let $\overline{\mathbf{X}}$ denote the restriction of $\mathbf{X}$ to the second factor. Define the map $s_{s,I}$ as:
\begin{multline*}
\mathbf{X}[I_0\xrightarrow{J^0_0\to\cdots\to J^0_m}I_1\xrightarrow{J^1_0\to\cdots\to J^1_\ell}\cdots\xrightarrow{J^{n-1}_0\to\cdots\to J^{n-1}_{m^\prime}}I_n]\xrightarrow{s_{s,I}}\\
\mathbf{X}[I_0\xrightarrow{\overline{\mathbf{X}}[J^0_0\to J^0_1]\times_{\overline{\mathbf{X}}[J^0_1]}\cdots\times_{\overline{\mathbf{X}}[J^0_{m-1}]}\overline{\mathbf{X}}[J^0_{m-1}\to J^0_m]}I_1] \times_{\mathbf{X}[I_1]}\cdots\times_{\mathbf{X}[I_{n-1}]}
\\ \mathbf{X}[I_{n-1}\xrightarrow{\overline{\mathbf{X}}[J^{n-1}_0\to J^{n-1}_1]\times_{\overline{\mathbf{X}}[J^{n-1}_1]}\cdots\times_{\overline{\mathbf{X}}[J^{n-1}_{m^\prime -1}]}\overline{\mathbf{X}}[J^{n-1}_{m^\prime -1}\to J^{n-1}_{m^\prime}]}I_n]
\end{multline*}
\end{construction}
\begin{construction}
As above, let $\mathbf{X}:(\Deltab_\Phi\boxtimes\Deltab_\Phi)^{op}\to\set^\mathrm{sc}_\Deltab$ be a functor. There is an inclusion $\Deltab^{\wr 2}\hookrightarrow\Deltab_\Phi\boxtimes\Deltab_\Phi$, so $\mathbf{X}$ restricts to a functor $X|_{(\Deltab^{\wr 2})^{op}}:(\Deltab^{\wr 2})^{op}:(\mathrm{Set}^\mathrm{sc}_\Deltab)^\circ$. Denote by $K$ the simplicial set $\Delta^{3}/(\Delta^{\{0,2\}}\sqcup\Delta^{\{1,3\}})$. Let $\mathbf{X}|_{(\Deltab^{\wr 2})^{op}_{K}}$ denote the homotopy limit of the diagram $(\Deltab^{\wr 2})^{op}_{K}\to(\Deltab^{\wr 2})^{op}\to\set^\mathrm{sc}_\Deltab$. Denote by $r_2$ the map $\mathbf{X}|_{(\Deltab^{\wr 2})^{op}}\to \mathbf{X}|_{(\Deltab^{\wr 2})^{op}_{K}}$.
\end{construction}
\begin{remark}
These are the ``right maps'' because under the (limit-preserving) canonical forgetful functor $(\Deltab_\Phi\boxtimes\Deltab_\Phi)^{op}\xrightarrow{F}\Deltab^{op}_\Phi$ these reduce to the maps defined by Barwick in \cite{operatorcat}.
\end{remark}
Let $\mathbf{X}^\otimes$ be a scaled simplicial set equipped with a left $2$-fibration $p:\mathbf{X}^\otimes\to\mathrm{N}(\Deltab_\Phi\boxtimes\Deltab_\Phi)^{op}$. This corresponds, via straightening, to a map $(\Deltab_\Phi\boxtimes\Deltab_\Phi)^{op}\to\set^\mathrm{sc}_\Deltab$. We can now define $2$-fold complete Segal $\Phi$-operads.
\begin{definition}
A $2$-fold complete Segal $\Phi$-operad is a left $2$-fibration $p:\mathbf{X}^\otimes\to\mathrm{N}(\Deltab_\Phi\boxtimes\Deltab_\Phi)^{op}$ such that if $\chi$ is its classifying map, then $p_{s,I},s_{s,I}$, and $r_2$ are all equivalences.
\end{definition}
\begin{remark}
Since $p:\mathbf{X}^\otimes\to\mathrm{N}(\Deltab^{\wr 2}_\Phi)^{op}$ is a left $2$-fibration, when $\Phi=\{1\}$, this reduces to the notion of a $2$-fold complete Segal space.
\end{remark}
\begin{example}\label{inclusionoperads}
This definition yields, when $\Phi=\cf$, a theory of $(\infty,2)$-operads (since $p$ is a left $2$-fibration, the fibers of $p$ over $s\in\mathrm{N}(\Deltab_\Phi\boxtimes\Deltab_\Phi)$ are all $\infty$-categories. This means that $X$ also contains information about the operadic structure at the level of higher homotopies as well). In particular, any left fibration is a left $2$-fibration, so any $\infty$-operad is trivially an $(\infty,2)$-operad. More generally, any complete Segal $\Phi$-operad is a $2$-fold complete Segal $\Phi$-operad.
\end{example}
\begin{example}\label{symmon}
By definition, a symmetric monoidal $(\infty,2)$-category is a section of the the functor $\mathrm{Cat}_{(\infty,2)}\to \mathrm{N}(\Deltab^{op}_\Phi)$ where $\mathrm{Cat}_{(\infty,2)}$ is the $\infty$-category of $(\infty,2)$-categories and $\Phi=\cf$, i.e., a functor $\Deltab^{op}_\Phi\to(\mathrm{Set}^\mathrm{sc}_\Deltab)^\circ$, which can be extended to a map $\Deltab^{op}_\Phi\to\set^\mathrm{sc}_\Deltab$. There is a canonical forgetful functor $(\Deltab_\Phi\boxtimes\Deltab_\Phi)^{op}\xrightarrow{F}\Deltab^{op}_\Phi$, and therefore a symmetric monoidal $(\infty,2)$-category is a functor $(\Deltab_\Phi\boxtimes\Deltab_\Phi)^{op}\xrightarrow{F}\Deltab^{op}_\Phi\to(\mathrm{Set}^\mathrm{sc}_\Deltab)^\circ\to\set^\mathrm{sc}_\Deltab$. Since $F$ is a forgetful functor, $p_{s,I},s_{s,I}$, and $r_2$ are all equivalences, and hence any symmetric monoidal $(\infty,2)$-category is an $(\infty,2)$-operad.
\end{example}
Choose a scaled simplicial set $S$. We observe that left $2$-fibrations completely determine fibrant objects of the left model structure (Theorem \ref{leftmodel}) on $(\mathrm{Set}^\mathrm{sc}_\Deltab)_{/S}$ in the sense that a left $2$-fibration $X\to S$ with base $S$ determines a fibrant scaled simplicial set $X^\natural$. A suitable Bousfield localization of $(\mathrm{Set}^\mathrm{sc}_\Deltab)_{/\mathrm{N}(\Deltab_\Phi\boxtimes\Deltab_\Phi)^{op}}$ can be constructed such that any fibrant object of $(\mathrm{Set}^\mathrm{sc}_\Deltab)_{/\mathrm{N}(\Deltab_\Phi\boxtimes\Deltab_\Phi)^{op}}$ yields a $2$-fold complete Segal $\Phi$-operad.
\begin{theorem}[$\Phi$-operadic model structure]
There is a combinatorial left proper simplicial model structure on $(\mathrm{Set}^\mathrm{sc}_\Deltab)_{/\mathrm{N}(\Deltab_\Phi\boxtimes\Deltab_\Phi)^{op}}$ that is a localization of the left model structure on $(\mathrm{Set}^\mathrm{sc}_\Deltab)_{/\mathrm{N}(\Deltab_\Phi\boxtimes\Deltab_\Phi)^{op}}$ such that every fibrant object $X\simeq (\mathbf{Y}^\otimes)^\natural$ of $(\mathrm{Set}^\mathrm{sc}_\Deltab)_{/\mathrm{N}(\Deltab_\Phi\boxtimes\Deltab_\Phi)^{op}}$ determines a $2$-fold complete Segal $\Phi$-operad $p:\mathbf{Y}^\otimes\to{\mathrm{N}(\Deltab_\Phi\boxtimes\Deltab_\Phi)^{op}}$.
\end{theorem}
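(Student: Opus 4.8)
The plan is to realize this model structure as a left Bousfield localization of the left model structure of Theorem \ref{leftmodel}. First I would record that the left model structure is combinatorial and left proper: this is exactly what Theorem \ref{leftmodel} provides, since its cofibrations are the monomorphisms of underlying simplicial sets and it is produced by the recognition criterion \cite[Proposition A.2.6.13]{highertopos}, which yields combinatorial left proper model structures. The simplicial enrichment is induced from the Cartesian monoidal structure of Proposition \ref{monoidalmodel} (forgetting the scaling on one factor to land in $\set_\Deltab$ with the Kan model structure). With these three properties in hand, the existence of the localization at any \emph{set} of maps is automatic from \cite[Proposition A.3.7.3]{highertopos}: the localized structure keeps the same cofibrations, has as weak equivalences the $S$-local equivalences, and has as fibrant objects the $S$-local fibrant objects of the left model structure, while combinatoriality, left properness, and the simplicial enrichment are all inherited.

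The substance of the argument is therefore the choice of the set $S$ of maps to invert. I would build $S$ out of three families of maps in $(\set^\mathrm{sc}_\Deltab)_{/\mathrm{N}(\Deltab_\Phi\boxtimes\Deltab_\Phi)^{op}}$, one for each of the structure maps $p_{s,I}$, $s_{s,I}$, and $r_2$. Concretely, for every object $(\mathbf{n},I,\{J^k\})$ of $\Deltab_\Phi\boxtimes\Deltab_\Phi$ the associated decomposition and spine inclusions determine, after applying the unstraightening functor $\mathrm{Un}^\mathrm{sc}_\phi$ of the remark following Theorem \ref{straight}, representable maps over $\mathrm{N}(\Deltab_\Phi\boxtimes\Deltab_\Phi)^{op}$; I take $S$ to be the set of these maps together with the Rezk-completeness map obtained by unstraightening the comparison against the homotopy limit over $(\Deltab^{\wr 2})^{op}_K$ used to define $r_2$. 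By the adjunction $(\mathrm{St}^\mathrm{sc}_\phi,\mathrm{Un}^\mathrm{sc}_\phi)$, an object $X$ is $S$-local precisely when its classifying map $\chi$ carries each of these inclusions to an equivalence of scaled mapping objects, i.e. precisely when $p_{s,I}$, $s_{s,I}$, and $r_2$ are all equivalences.

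It then remains to match fibrant objects with $2$-fold complete Segal $\Phi$-operads. A fibrant object of the localized structure is a fibrant object $X$ of the left model structure that is in addition $S$-local. By the discussion preceding the theorem, a fibrant object of the left model structure is exactly one of the form $X\simeq(\mathbf{Y}^\otimes)^\natural$ for a left $2$-fibration $p:\mathbf{Y}^\otimes\to\mathrm{N}(\Deltab_\Phi\boxtimes\Deltab_\Phi)^{op}$; combining this with the $S$-locality identification of the previous paragraph shows that $p$ is a left $2$-fibration whose classifying map makes $p_{s,I}$, $s_{s,I}$, and $r_2$ equivalences, which is exactly the definition of a $2$-fold complete Segal $\Phi$-operad.

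The hard part will be the second paragraph: because Theorem \ref{straight} furnishes only a \emph{weak} straightening map (a comparison of $(\infty,2)$-categories rather than a Quillen equivalence on the nose), I must check that $S$-locality in the slice category really is equivalent to the three conditions on the straightened functor, rather than merely implying one direction. The most delicate case is the completeness map $r_2$, whose definition passes through the homotopy limit over $(\Deltab^{\wr 2})^{op}_K$; encoding this as inversion of a single map over the base and verifying that its unstraightening detects exactly the equivalences $r_2$ is where the technical care is concentrated. I would handle it by reducing, via the limit-preserving forgetful functor $F:(\Deltab_\Phi\boxtimes\Deltab_\Phi)^{op}\to\Deltab^{op}_\Phi$, to the one-fold completeness map already controlled in \cite{operatorcat}, and then pulling the resulting localization back along $F$.
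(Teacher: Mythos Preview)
The paper does not supply a proof of this theorem; it is stated and then immediately followed by a construction unrelated to its justification. So there is nothing in the paper to compare your argument against beyond the bare assertion in the statement that the desired model structure is ``a localization of the left model structure,'' which is exactly the strategy you adopt.

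Your outline is the natural one and matches what the paper implicitly intends: start from Theorem~\ref{leftmodel}, invoke \cite[Proposition A.3.7.3]{highertopos} to Bousfield localize at a set $S$ encoding the three conditions, and identify the $S$-local fibrant objects. Two points deserve comment. First, your claim that the left model structure is simplicial is not established anywhere in the paper; Proposition~\ref{monoidalmodel} concerns the Cartesian monoidal structure on $\set^\mathrm{sc}_\Deltab$ itself, not a Kan-enrichment compatible with the left model structure on the slice, so this step needs an actual argument (or one should instead appeal to a combinatorial-left-proper version of Bousfield localization that does not require simplicial enrichment). Second, you are right that the hard part is translating $S$-locality into the conditions on the classifying functor: Theorem~\ref{straight} in this paper only produces a comparison map, not a Quillen equivalence, so the adjunction $(\mathrm{St}^\mathrm{sc}_\phi,\mathrm{Un}^\mathrm{sc}_\phi)$ you invoke to pass between locality in the slice and equivalences of $p_{s,I}$, $s_{s,I}$, $r_2$ is not known here to be homotopically meaningful in the required direction. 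Your proposed reduction via the forgetful functor $F$ to the one-fold case of \cite{operatorcat} is plausible for $r_2$ but does not address $p_{s,I}$ and $s_{s,I}$, which genuinely involve the second $\Deltab_\Phi$ factor. These are real gaps, but they are gaps the paper itself leaves open rather than issues with your approach relative to the paper's.
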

\begin{construction}
If $F:\Phi\to \Psi$ is a map of operator categories, there is an induced map $\mathrm{N}(\Deltab_\Phi\boxtimes\Deltab_\Phi)^{op}\to\mathrm{N}(\Deltab^{\wr 2}_{\Psi})^{op}$, which induces an adjunction $F_{!}:(\mathrm{Set}^\mathrm{sc}_\Deltab)_{/\mathrm{N}(\Deltab_\Phi\boxtimes\Deltab_\Phi)^{op}}\leftrightarrow(\mathrm{Set}^\mathrm{sc}_\Deltab)_{/\mathrm{N}(\Deltab^{\wr 2}_{\Psi})^{op}}:F_\star$. Let $u:\Phi\to \cf$ be the essentially unique map of operator categories. The symmetrization of a $2$-fold complete Segal $\Phi$-operad $\mathbf{X}^\otimes\to{\mathrm{N}(\Deltab_\Phi\boxtimes\Deltab_\Phi)^{op}}$ is the image of $\mathbf{X}^\otimes\to{\mathrm{N}(\Deltab_\Phi\boxtimes\Deltab_\Phi)^{op}}$ under $u_{!}$. 
\end{construction}
\begin{definition}
Let $\mathrm{id}_{\mathrm{N}(\Deltab_\Phi\boxtimes\Deltab_\Phi)^{op}}:{\mathrm{N}(\Deltab_\Phi\boxtimes\Deltab_\Phi)^{op}}\to{\mathrm{N}(\Deltab_\Phi\boxtimes\Deltab_\Phi)^{op}}$ denote the terminal object of $(\mathrm{Set}^\mathrm{sc}_\Deltab)_{/\mathrm{N}(\Deltab_\Phi\boxtimes\Deltab_\Phi)^{op}}$. Then:
\begin{enumerate}
\item The symmetrization of $\mathrm{id}_{\mathrm{N}(\Deltab_\Phi\boxtimes\Deltab_\Phi)^{op}}$ when $\Phi=\cf$ is denoted $\mathbf{E}^2_\infty$.
\item The symmetrization of $\mathrm{id}_{\mathrm{N}(\Deltab_\Phi\boxtimes\Deltab_\Phi)^{op}}$ when $\Phi=\calO$ is the operator category of ordered finite sets is denoted $\mathbf{E}^2_1$.
\item For an integer $k\geq 0$, the symmetrization of $\mathrm{id}_{\mathrm{N}(\Deltab_\Phi\boxtimes\Deltab_\Phi)^{op}}$ when $\Phi=\calO^{(k)}=\calO\wr\calO\cdots\wr\calO$ is the wreath product of $\calO$ with itself $k$ times is denoted $\mathbf{E}^2_k$.
\end{enumerate}
\end{definition}
These are analogues of the commutative, associative, and $\mathbf{E}_k$-operads.
\subsection{$2$-rings}
\begin{definition}
Throughout the rest of this section, fix $\Phi=\cf$. Let $\cc$ be a symmetric monoidal $(\infty,2)$-category, and let $p:\mathbf{Y}^\otimes\to{\mathrm{N}(\Deltab_\Phi\boxtimes\Deltab_\Phi)^{op}}$ be a complete Segal $\Phi$-operad. A $\mathbf{Y}$-algebra object of $\cc$ is a map of fibrant objects $\mathbf{Y}^\otimes\to \cc$ over ${\mathrm{N}(\Deltab_\Phi\boxtimes\Deltab_\Phi)^{op}}$ in $(\mathrm{Set}^\mathrm{sc}_\Deltab)_{/\mathrm{N}(\Deltab_\Phi\boxtimes\Deltab_\Phi)^{op}}$.
\end{definition}
\begin{example}
If $\mathbf{Y}^\otimes=\mathbf{E}^2_\infty$ and $\cc$ is $\mathrm{Cat}_\infty$, this yields a weaker notion of symmetric monoidal $\infty$-category. 
\end{example}
\begin{construction}\label{forgetful}
Consider the identity map $\mathrm{N}(\Deltab_\Phi\boxtimes\Deltab_\Phi)^{op}\to\mathrm{N}(\Deltab_\Phi\boxtimes\Deltab_\Phi)^{op}$. Define a map of scaled simplicial sets $\mathrm{N}(\Deltab_\Phi)^{op}\to \mathrm{N}(\Deltab_\Phi\boxtimes\Deltab_\Phi)^{op}$ which takes $(\mathbf{n},I:\mathbf{n}\to \Phi)$ to $((\mathbf{n},I:\mathbf{n}\to\Phi),\{([|I_k|],J^k:[|I_k|]\to\Phi)\}_{k\in\mathbf{n}})$ with $J^k$ the constant $\Phi$-sequence of length $|I_k|$ where every element is $I_k$. There is a canonical map $\mathrm{N}(\Deltab_\Phi\boxtimes\Deltab_\Phi)^{op}\to\mathrm{N}(\Deltab_\Phi)^{op}$ given by taking $((\mathbf{n},I:\mathbf{n}\to\Phi),\{([|I_k|],J^k:[|I_k|]\to\Phi)\}_{k\in\mathbf{n}})$ to $(\mathbf{n},I:\mathbf{n}\to \Phi)$. The composite $\mathrm{N}(\Deltab_\Phi)^{op}\to\mathrm{N}(\Deltab_\Phi)^{op}$ is the identity map, and when $\Phi=\cf$, we can consider the symmetrization of the underlying map of simplicial sets; this is the $\infty$-operad $\mathbf{E}_\infty$. We therefore get a map $\mathbf{E}^2_\infty\to \overline{\mathbf{E}_\infty}$ where $\overline{\mathbf{E}_\infty}$ is the $\mathbf{E}_\infty$-operad viewed as an $(\infty,2)$-operad (Example \ref{inclusionoperads}).
\end{construction}
\begin{prop}\label{reduction}
Let $\cc$ be a symmetric monoidal $\infty$-category viewed as a symmetric monoidal $(\infty,2)$-category by regarding each mapping simplicial set as a marked simplicial set. Every $\mathbf{E}_\infty$-algebra object of $\cc$ is a $\mathbf{E}^2_\infty$-algebra object of $\cc$.
\end{prop}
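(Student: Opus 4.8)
The plan is to realize every $\mathbf{E}_\infty$-algebra object of $\cc$ as the restriction of itself along the forgetful comparison $\phi\colon\mathbf{E}^2_\infty\to\overline{\mathbf{E}_\infty}$ produced in Construction \ref{forgetful}; in other words, the proposition amounts to the statement that restriction of algebras along $\phi$ is defined. Write $B=\mathrm{N}(\Deltab_\Phi\boxtimes\Deltab_\Phi)^{op}$ with $\Phi=\cf$, and let $F\colon(\Deltab_\Phi\boxtimes\Deltab_\Phi)^{op}\to\Deltab_\Phi^{op}$ be the limit-preserving forgetful functor. First I would recall that, by Example \ref{inclusionoperads}, the $\mathbf{E}_\infty$-operad pulled back along $F$ is the $2$-fold complete Segal $\cf$-operad $\overline{\mathbf{E}_\infty}$, a fibrant object over $B$, and that by Example \ref{symmon} the symmetric monoidal $\infty$-category $\cc$ (with every mapping $1$-simplex marked, so that all $2$-morphisms are invertible) is likewise a fibrant object over $B$ obtained by pulling a functor $\Deltab_\Phi^{op}\to\set^\mathrm{sc}_\Deltab$ back along $F$. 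Under the definition of a $\mathbf{Y}$-algebra object, an $\mathbf{E}_\infty$-algebra object of $\cc$ is then exactly a map of fibrant objects $A\colon\overline{\mathbf{E}_\infty}\to\cc$ over $B$, obtained by pulling back the corresponding $\infty$-operad map over $\mathrm{N}(\Deltab_\Phi)^{op}$.

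Next I would invoke Construction \ref{forgetful}, which provides the morphism $\phi\colon\mathbf{E}^2_\infty\to\overline{\mathbf{E}_\infty}$ in $(\mathrm{Set}^\mathrm{sc}_\Deltab)_{/B}$: it is the image under $u_{!}$ (for the essentially unique $u\colon\Phi\to\cf$) of the composite $\mathrm{N}(\Deltab_\Phi)^{op}\to B\to\mathrm{N}(\Deltab_\Phi)^{op}$, whose underlying composite is the identity. Given $A$ as above, I form the composite $A\circ\phi\colon\mathbf{E}^2_\infty\to\cc$ in the slice $(\mathrm{Set}^\mathrm{sc}_\Deltab)_{/B}$. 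Since $\mathbf{E}^2_\infty$ and $\cc$ are fibrant over $B$ and the slice category is closed under composition, $A\circ\phi$ is again a map of fibrant objects over $B$; by the definition of a $\mathbf{Y}$-algebra object with $\mathbf{Y}=\mathbf{E}^2_\infty$, this exhibits $A\circ\phi$ as an $\mathbf{E}^2_\infty$-algebra object of $\cc$. Thus $\phi$ induces the restriction assignment $A\mapsto A\circ\phi$ carrying $\mathbf{E}_\infty$-algebra objects to $\mathbf{E}^2_\infty$-algebra objects, which is precisely the content of the proposition.

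The only genuine verification, and the place where I expect the (mild) main obstacle, is that $\phi$ is a bona fide morphism over $B$ in the \emph{scaled} setting, i.e.\ that it covers $\mathrm{id}_B$ and carries thin $2$-simplices to thin $2$-simplices. Covering $\mathrm{id}_B$ is built into the construction of $\phi$ over $B$. For the scaling condition, the subtlety is compatibility of the symmetrization functor $u_{!}$ with the marking on $\cc$; but since $\cc$ arises by pulling a symmetric monoidal structure back along $F$ and every $2$-morphism of $\cc$ is invertible, every $2$-simplex occurring in the image of $A\circ\phi$ is automatically thin, so the condition is vacuous on the target. With this checked, the precomposition is well defined and the proposition follows.
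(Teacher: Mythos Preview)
Your proposal is correct and follows essentially the same approach as the paper: both use the comparison map $\phi\colon\mathbf{E}^2_\infty\to\overline{\mathbf{E}_\infty}$ from Construction \ref{forgetful} and identify an $\mathbf{E}_\infty$-algebra in $\cc$ with a map $\overline{\mathbf{E}_\infty}\to\cc$ over $B$, then precompose with $\phi$. The paper phrases this as an equivalence between $\mathbf{E}_\infty$-algebras and those $\mathbf{E}^2_\infty$-algebras factoring through $\overline{\mathbf{E}_\infty}$, while you state only the direction the proposition asks for; your explicit check that $\phi$ is a scaled morphism over $B$ is a detail the paper leaves implicit.
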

\begin{proof}
Let $\overline{\mathbf{E}_\infty}$ denote the $\mathbf{E}_\infty$-operad viewed as an $(\infty,2)$-operad (Example \ref{inclusionoperads}) by regarding each mapping simplicial set as a marked simplicial set. By abuse of notation we will write $\cc$ for $\cc$ regarded as a symmetric monoidal $(\infty,2)$-category. Suppose the map $\mathbf{E}^2_\infty\to\cc$ factors as $\mathbf{E}^2_\infty\to \overline{\mathbf{E}_\infty}\to\cc$ over ${\mathrm{N}(\Deltab_\Phi\boxtimes\Deltab_\Phi)^{op}}$ in $(\mathrm{Set}^\mathrm{sc}_\Deltab)_{/\mathrm{N}(\Deltab_\Phi\boxtimes\Deltab_\Phi)^{op}}$, where the map $\mathbf{E}^2_\infty\to \mathbf{E}_\infty$ is constructed above. Since the map $\mathbf{E}^2_\infty\to \mathbf{E}_\infty$ is uniquely specified, such an $\mathbf{E}^2_\infty$-algebra object of $\cc$ is equivalent to a map $\overline{\mathbf{E}_\infty}\to\cc$ over ${\mathrm{N}(\Deltab_\Phi\boxtimes\Deltab_\Phi)^{op}}$ in $(\mathrm{Set}^\mathrm{sc}_\Deltab)_{/\mathrm{N}(\Deltab_\Phi\boxtimes\Deltab_\Phi)^{op}}$. This is equivalent to a map $\mathbf{E}_\infty\to\cc$ of ordinary $\infty$-operads by the assumptions on $\cc$; but this is just the definition of an $\mathbf{E}_\infty$-algebra object of $\cc$, so we are done.
\end{proof}
\begin{corollary}\label{consistency}
Every symmetric monoidal $\infty$-category (resp. symmetric monoidal stable $\infty$-category) is a $\mathbf{E}^2_\infty$-algebra object of $\mathrm{Cat}_\infty$ (resp. $\mathrm{Cat}^\mathrm{st}_\infty$).
\end{corollary}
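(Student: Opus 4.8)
The plan is to deduce the corollary directly from Proposition~\ref{reduction}, which already performs the essential work of promoting an $\mathbf{E}_\infty$-algebra to an $\mathbf{E}^2_\infty$-algebra; what remains is to identify the two instances of the ambient symmetric monoidal $(\infty,2)$-category and to check that each meets the hypotheses of that proposition.

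First I would recall the classical input explained in the introduction: via straightening, a symmetric monoidal $\infty$-category is the same datum as an $\mathbf{E}_\infty$-algebra object of $\mathrm{Cat}_\infty$, since $\mathrm{N}(\cf_\ast)$ models the $\mathbf{E}_\infty$-operad. In the stable case, a symmetric monoidal stable $\infty$-category, i.e.\ a stable $\cc$ whose tensor bifunctor is exact separately in each variable, is precisely an $\mathbf{E}_\infty$-algebra object of $\mathrm{Cat}^\mathrm{st}_\infty$; here one uses that $\mathrm{Cat}^\mathrm{st}_\infty$ is the (non-full) subcategory of $\mathrm{Cat}_\infty$ carved out in Example~\ref{prototype}, whose mapping objects are the exact-functor categories $\mathrm{Fun}^\mathrm{ex}(\cc,\cd)$, and that this subcategory is closed under the symmetric monoidal structure given by the tensor product of stable $\infty$-categories.

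Next I would verify that both $\mathrm{Cat}_\infty$ and $\mathrm{Cat}^\mathrm{st}_\infty$ are of exactly the shape required by Proposition~\ref{reduction}, namely a symmetric monoidal $\infty$-category regarded as a symmetric monoidal $(\infty,2)$-category \emph{by marking every $1$-simplex of each mapping simplicial set}. For $\mathrm{Cat}_\infty$ this is immediate from Example~\ref{basiclimits}, where the $(\infty,2)$-category is obtained as the scaled nerve $\mathrm{N}^\mathrm{sc}$ of the $\set^+_\Deltab$-enriched category of $\infty$-categories with exactly this marking. For $\mathrm{Cat}^\mathrm{st}_\infty=\mathrm{N}^\mathrm{sc}(\mathbf{Cat}^\mathrm{st}_\infty)$ the same holds by Example~\ref{prototype}, once one records that each mapping object $\mathrm{Fun}^\mathrm{ex}(\cc,\cd)$ is given the marking in which all $1$-simplices are marked. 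With both identifications in place, Proposition~\ref{reduction} applies verbatim with $\cc=\mathrm{Cat}_\infty$ (resp.\ $\cc=\mathrm{Cat}^\mathrm{st}_\infty$): every $\mathbf{E}_\infty$-algebra object of $\cc$ is an $\mathbf{E}^2_\infty$-algebra object of $\cc$. Combining this with the first step gives the statement.

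The main obstacle I anticipate is the stable case, and specifically confirming that the hypotheses of Proposition~\ref{reduction} are genuinely satisfied by $\mathrm{Cat}^\mathrm{st}_\infty$. Two points need care: that the tensor product of stable $\infty$-categories really does furnish $\mathbf{Cat}^\mathrm{st}_\infty$ with a symmetric monoidal structure for which the $\mathbf{E}_\infty$-algebras are the symmetric monoidal stable $\infty$-categories (so that the classical correspondence of the first step is available in the stable setting), and that the marking convention of Example~\ref{prototype} coincides with the ``all $1$-simplices marked'' convention demanded by the proposition, so that the inclusion $\mathrm{Cat}^\mathrm{st}_\infty\hookrightarrow\mathrm{Cat}_\infty$ is compatible with the markings. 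The unstable case, by contrast, is formal once Example~\ref{basiclimits} is invoked.
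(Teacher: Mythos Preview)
Your proposal is correct and follows exactly the route the paper intends: the corollary is stated without proof in the paper, as an immediate consequence of Proposition~\ref{reduction}, and you have correctly identified this and supplied the two missing verifications (that symmetric monoidal (stable) $\infty$-categories are $\mathbf{E}_\infty$-algebras in the relevant target, and that $\mathrm{Cat}_\infty$ and $\mathrm{Cat}^\mathrm{st}_\infty$ as defined in Examples~\ref{basiclimits} and~\ref{prototype} carry the ``all edges marked'' convention required by the proposition). Your flagged obstacle in the stable case is a fair point of care rather than a gap, and is consistent with how the paper treats the analogous case of $\mathbf{Sp}^\sharp$ in the subsequent proposition.
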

\begin{definition}
A $2$-ring is a $\mathbf{E}^2_\infty$-algebra object of $\mathbf{Mod}_\mathbf{Sp}(\cP\mathrm{r^L})$.
\end{definition}
\begin{example}
By Corollary \ref{consistency}, any symmetric monoidal stable $\infty$-category is a $2$-ring. Thus, if $R$ is a $\mathbf{E}_\infty$-ring, the stable $\infty$-category $\mathbf{Mod}_R$ is a $2$-ring.
\end{example}
\begin{construction}
Let $p:\mathbf{X}^\otimes\to{\mathrm{N}(\Deltab_\Phi\boxtimes\Deltab_\Phi)^{op}}$ be a $2$-fold complete Segal $\Phi$-operad, and let $\mathbf{X}:{\mathrm{N}(\Deltab_\Phi\boxtimes\Deltab_\Phi)^{op}}\to\mathrm{N}^\mathrm{sc}(\set^\mathrm{sc}_\Deltab)$ be its classifying map. The maximal $\infty$-category in the fiber of the map $\mathbf{X}[I_\bullet] = \mathbf{X}[I_0\xrightarrow{J^0_0\to\cdots\to J^0_m}I_1\xrightarrow{J^1_0\to\cdots\to J^1_\ell}\cdots\xrightarrow{J^{n-1}_0\to\cdots\to J^{n-1}_{m^\prime}}I_n]\to \mathbf{X}[I_0]\times\cdots\times \mathbf{X}[I_n]$ over the vertex $((x^0_{i_0})_{i_0\in |I_0|},\cdots,(x^n_{i_n})_{i_n\in |I_n|})$ is denoted $\mathrm{Map}_{\mathbf{X}^\otimes}^{[I_\bullet]}((x^0_{i_0})_{i_0\in |I_0|},\cdots,(x^n_{i_n})_{i_n\in |I_n|})$. Suppose $n=1$, and $I_n=\langle 1\rangle$; then $\mathrm{Map}_{\mathbf{X}^\otimes}^{[I_0\xrightarrow{J^0_\bullet}I_1]}((x^0_{i_0})_{i_0\in |I_0|},y)$ can be thought of as the polycomposition maps from $(x^0_{i_0})_{i_0\in |I_0|}$ to $y$. If $\mathbf{X}^\otimes$ is a symmetric monoidal $(\infty,2)$-category, $(x^0_{i_0})_{i_0\in |I_0|}$ is denoted $\bigotimes_{i_0\in |I_0|} x^0_{i_0}$, and is called the tensor product of the $x^0_{i_0}$.
\end{construction}
\begin{remark}
This is technically abuse of notation since the fiber of the map $\mathbf{X}[I_\bullet] = \mathbf{X}[I_0\xrightarrow{J^0_0\to\cdots\to J^0_m}I_1\xrightarrow{J^1_0\to\cdots\to J^1_\ell}\cdots\xrightarrow{J^{n-1}_0\to\cdots\to J^{n-1}_{m^\prime}}I_n]\to \mathbf{X}[I_0]\times\cdots\times \mathbf{X}[I_n]$ over the vertex $((x^0_{i_0})_{i_0\in |I_0|},\cdots,(x^n_{i_n})_{i_n\in |I_n|})$ depends on the choices of $J^i_\bullet$. If $n=1$, and $I_n=\langle 1\rangle$, for the rest of this paper, we will let $J^0_\bullet = \langle 1\rangle$.
\end{remark}
\begin{example}\label{tensorexample}
If $\mathbf{X}^\otimes$ is an ordinary complete Segal $\Phi$-operad viewed as a $2$-fold complete Segal $\Phi$-operad, this reduces to the ordinary notion of a polycomposition map as in \cite{operatorcat}, because in this case, the maximal $\infty$-category in the fiber is simply the fiber itself. If $\mathbf{X}^\otimes$ is also a symmetric monoidal $\infty$-category, this reduces to the ordinary tensor product via Example \ref{inclusionoperads} and Example \ref{symmon}.
\end{example}
\begin{definition}
Let $\cc$ be a symmetric monoidal $(\infty,2)$-category, and let $M$ be an object of $\cc$. An endomorphism object $\mathrm{End}_{\cc}(M)$ of $M$ is an object representing the functor $\cc\to \mathrm{N}(\set^+_\Deltab)^\circ$ given by $C\mapsto \mathrm{Map}_{\cc^\otimes}(C\otimes M,M)$.
\end{definition}
\begin{prop}
Let $\cc$ be a $2$-ring. The algebraic K-theory $K(\cc)$ is an $\mathbf{E}_\infty$-ring.
\end{prop}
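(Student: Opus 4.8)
The plan is to reduce to the known multiplicativity of algebraic $K$-theory for ordinary symmetric monoidal stable $\infty$-categories, by first extracting an underlying $\mathbf{E}_\infty$-algebra structure from the $\mathbf{E}^2_\infty$-algebra $\cc$ and then transporting it through $K$. Concretely, the $\mathbf{E}_\infty$-operad sits inside $\mathbf{E}^2_\infty$ as a subsimplicial set (as recorded in the introduction), giving an inclusion $\overline{\mathbf{E}_\infty}\hookrightarrow\mathbf{E}^2_\infty$ over $\mathrm{N}(\Deltab_\Phi\boxtimes\Deltab_\Phi)^{op}$ with $\Phi=\cf$, compatible with the map of Construction \ref{forgetful}. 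Since $\cc$ is a $2$-ring it is classified by a map $\mathbf{E}^2_\infty\to\mathbf{Mod}_\mathbf{Sp}(\cP\mathrm{r^L})$ of fibrant objects, and precomposing with this inclusion yields an ordinary $\mathbf{E}_\infty$-algebra object $\overline{\mathbf{E}_\infty}\to\mathbf{Mod}_\mathbf{Sp}(\cP\mathrm{r^L})$. This is the reverse of the inflation in Proposition \ref{reduction}, and by Example \ref{tensorexample} the tensor product it records is the underlying tensor product of $\cc$; none of the genuinely $(\infty,2)$-categorical data is needed beyond this point.

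Next I would invoke the lax symmetric monoidality of algebraic $K$-theory. Viewing $K$ as a functor from compactly generated stable $\infty$-categories to $\Sp$ --- so that $K$ of a presentable stable $\infty$-category is the $K$-theory of its compact objects --- it is lax symmetric monoidal for the tensor product of stable $\infty$-categories and the smash product of spectra. A lax symmetric monoidal functor sends $\mathbf{E}_\infty$-algebra objects to $\mathbf{E}_\infty$-algebra objects, so applying $K$ to the symmetric monoidal stable $\infty$-category produced above yields an $\mathbf{E}_\infty$-algebra in $\Sp$, i.e. an $\mathbf{E}_\infty$-ring. Declaring $K(\cc)$ to be this spectrum finishes the argument.

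The main obstacle will be making the multiplicativity of $K$ precise in the presentable setting: one must handle the size issues in defining $K$ on $\mathbf{Mod}_\mathbf{Sp}(\cP\mathrm{r^L})$ --- hence the passage to compact objects --- and check that restriction to compact objects is symmetric monoidal on compactly generated objects, so that the lax monoidal structure on $K$ pairs correctly with the monoidal structure extracted in the first step. A secondary point is verifying that $\overline{\mathbf{E}_\infty}\hookrightarrow\mathbf{E}^2_\infty$ is a map of fibrant objects restricting to the intended symmetric monoidal structure; given Proposition \ref{reduction} and Example \ref{tensorexample} this compatibility should be formal.
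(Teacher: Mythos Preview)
Your argument is correct but runs in a different order from the paper's. The paper first asserts that $K(\cc)$ is an $\mathbf{E}^2_\infty$-algebra object of $\mathbf{Sp}^\sharp$, then observes that $\mathbf{Sp}^\sharp=\mathrm{N}^\mathrm{sc}(\overline{\mathbf{Sp}})$ arises from an ordinary $\infty$-category by maximally marking the mapping spaces, and finally invokes Proposition \ref{reduction} (really the equivalence implicit in its proof, since the stated direction is the opposite one) to descend the $\mathbf{E}^2_\infty$-structure on $K(\cc)$ to an $\mathbf{E}_\infty$-structure. In short: the paper applies $K$ first and restricts to $\mathbf{E}_\infty$ afterward, while you restrict $\cc$ to an ordinary symmetric monoidal stable $\infty$-category first and then push through $K$ using its lax symmetric monoidality.

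Your route has the virtue of resting on a standard, well-documented fact---the lax symmetric monoidality of $K$ on (compactly generated) stable $\infty$-categories---and the size/compactness issues you flag are the right ones to check. The paper's route is shorter on the page but leaves unproved the claim that $K$ carries $\mathbf{E}^2_\infty$-algebras in $\mathbf{Mod}_\mathbf{Sp}(\cP\mathrm{r^L})$ to $\mathbf{E}^2_\infty$-algebras in $\mathbf{Sp}^\sharp$, i.e.\ that $K$ is suitably monoidal at the $(\infty,2)$-categorical level; it also uses Proposition \ref{reduction} in the direction opposite to its statement. One small point on your side: what you need is a map $\overline{\mathbf{E}_\infty}\to\mathbf{E}^2_\infty$ of $2$-fold complete Segal $\cf$-operads, and this exists because $\mathbf{E}^2_\infty$ is (the fibrant replacement of) the terminal object; you do not need to know it is literally a subsimplicial-set inclusion.
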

\begin{proof}
$K(\cc)$ is a $\mathbf{E}^2_\infty$-algebra object of $\mathbf{Sp}^\sharp$. Since $\mathbf{Sp}^\sharp = \mathrm{N}^\mathrm{sc}(\overline{\mathbf{Sp}})$, where $\overline{\mathbf{Sp}}$ is the $\set^+_\Deltab$-enriched category where every mapping simplicial set is regarded as a marked simplicial set, it follows from Proposition \ref{reduction} that this corresponds to a $\mathbf{E}_\infty$-ring.
\end{proof}
\begin{remark}
There is a quadfiltration of $\mathbf{E}^2_\infty$, as in the following diagram.
\begin{equation*}
\xymatrix@!0{
\mathbf{E}^2_0\ar@[red][rr]\ar@{=}@[red][dd]\ar[dr] & & \mathcal{P}(1,1)\ar@[red][rr]\ar@[red][dd]\ar[dr] & & \cdots\ar@[red][rr]\ar@[red][dd]\ar[dr] & & \mathcal{P}(n,1)\ar@[red][rr]\ar@[red][dd]\ar[dr] & & \cdots\ar@[red][rr]\ar@[red][dd]\ar[dr] & & \mathcal{P}(\infty,1)\ar@[red][dd]\ar[dr]
\\
 & \mathbf{E}_0\ar@[blue]'[r][rr]\ar@{=}@[blue]'[d][dd] & & A^1_1\ar@[blue]'[r][rr]\ar@[blue]'[d][dd] & & \cdots\ar@[blue]'[r][rr]\ar'[d][dd] & & A^n_1\ar@[blue]'[r][rr]\ar@[blue]'[d][dd] & & \cdots\ar@[blue]'[r][rr]\ar@[blue]'[d][dd] & & A^\infty_1\ar@[blue][dd]
\\
\vdots\ar@[red][rr]\ar@{=}@[red][dd]\ar[dr] & & \vdots\ar@[red][rr]\ar@[red][dd]\ar[dr] & & \vdots\ar@[red][rr]\ar@[red][dd]\ar[dr] & & \vdots\ar@[red][rr]\ar@[red][dd]\ar[dr] & & \vdots\ar@[red][rr]\ar@[red][dd]\ar[dr] & & \vdots\ar[dd]\ar[dr]
\\
 & \vdots\ar@[blue]'[r][rr]\ar@{=}@[blue]'[d][dd] & & \vdots\ar@[blue]'[r][rr]\ar@[blue]'[d][dd] & & \vdots\ar@[blue]'[r][rr]\ar@[blue]'[d][dd] & & \vdots\ar@[blue]'[r][rr]\ar@[blue]'[d][dd] & & \vdots\ar@[blue]'[r][rr]\ar@[blue]'[d][dd] & & \vdots\ar@[blue][dd]
\\
\mathbf{E}^2_0\ar@[red][rr]\ar@{=}@[red][dd]\ar[dr] & & \mathcal{P}(1,m)\ar@[red][rr]\ar@[red][dd]\ar[dr] & & \cdots\ar@[red][rr]\ar@[red][dd]\ar[dr] & & \mathcal{P}(n,m)\ar@[red][rr]\ar@[red][dd]\ar[dr] & & \cdots\ar@[red][rr]\ar@[red][dd]\ar[dr] & & \mathcal{P}(\infty,m)\ar@[red][dd]\ar[dr]
\\
 & \mathbf{E}_0\ar@[blue]'[r][rr]\ar@{=}@[blue]'[d][dd] & & A^1_m\ar@[blue]'[r][rr]\ar@[blue]'[d][dd] & & \cdots\ar@[blue]'[r][rr]\ar@[blue]'[d][dd] & & A^n_m\ar@[blue]'[r][rr]\ar@[blue]'[d][dd] & & \cdots\ar@[blue]'[r][rr]\ar@[blue]'[d][dd] & & A^\infty_m\ar@[blue][dd]
\\
\vdots\ar@[red][rr]\ar@{=}@[red][dd]\ar[dr] & & \vdots\ar@[red][rr]\ar@[red][dd]\ar[dr] & & \vdots\ar@[red][rr]\ar@[red][dd]\ar[dr] & & \vdots\ar@[red][rr]\ar@[red][dd]\ar[dr] & & \vdots\ar@[red][rr]\ar@[red][dd]\ar[dr] & & \vdots\ar@[red][dd]\ar[dr]
\\
 & \vdots\ar@[blue]'[r][rr]\ar@{=}@[blue]'[d][dd] & & \vdots\ar@[blue]'[r][rr]\ar@[blue]'[d][dd] & & \vdots\ar@[blue]'[r][rr]\ar@[blue]'[d][dd] & & \vdots\ar@[blue]'[r][rr]\ar@[blue]'[d][dd] & & \vdots\ar@[blue]'[r][rr]\ar@[blue]'[d][dd] & & \vdots\ar@[blue][dd]
\\
\mathbf{E}^2_0\ar@[red][rr]\ar[dr] & & \mathbf{E}^2_1\ar@[red][rr]\ar[dr] & & \cdots\ar@[red][rr]\ar[dr] & & \mathbf{E}^2_m\ar@[red][rr]\ar[dr] & & \cdots\ar@[red][rr]\ar[dr] & & \mathbf{E}^2_\infty\ar[dr]
\\
 & \mathbf{E}_0\ar@[blue][rr] & & A^1_1\ar@[blue][rr] & & \cdots\ar@[blue][rr] & & A^n_1\ar@[blue][rr] & & \cdots\ar@[blue][rr] & & A^\infty_1
}
\end{equation*}
The blue arrows give the bifiltration of $\mathbf{E}_\infty$ as in \cite[Example 11.5]{operatorcat}, and the red arrows give the bifiltration of $\mathbf{E}^2_\infty$ in terms of $(\infty,2)$-operads that are not $\infty$-operads. This might give an obstruction theory for $\mathbf{E}^2_\infty$-structures on objects of symmetric monoidal $(\infty,2)$-categories, and also an obstruction theory for extending $\mathbf{E}^2_k$-structures to $\mathbf{E}_k$-structures. We have not explored this.
\end{remark}
\begin{definition}\label{modules}
Let $\cc$ be a symmetric monoidal $(\infty,2)$-category, and let $R$ be a $\mathbf{E}^2_\infty$-algebra object of $\cc$. An $R$-module is an object $M$ of $\cc$ with a map $R\to \mathrm{End}_{\cc}(M)$.
\end{definition}
\begin{remark}
By the definition of an endomorphism object, an $R$-module $M$ is equivalently an object of $\cc$ with a map $R\otimes M\to M$. Since we assumed that $J^0_\bullet = \langle 1\rangle$, if $\cc$ is a symmetric monoidal $\infty$-category viewed as a symmetric monoidal $(\infty,2)$-category, this corresponds precisely to the tensor product in the $\infty$-category $\cc$.
\end{remark}
\begin{prop}
There is a universal property for the tensor product, and this uniquely determines the tensor product. In other words, if $I_\bullet = I_0\xrightarrow{J^0_\bullet} I_1\xrightarrow{J^1_\bullet} I_2$:
\begin{eqnarray*}
\mathrm{Map}_{\mathbf{X}^\otimes}^{[I_\bullet]}((x^0_{i_0})_{i_0\in |I_0|},\mathrm{Map}_{\mathbf{X}^\otimes}^{[I^\prime_\bullet]}((x^1_{i_1})_{i_1\in |I_1|},(x^2_{i_2})_{i_2\in |I_2|}))\simeq\\
\mathrm{Map}_{\mathbf{X}^\otimes}^{[I_\bullet]}((x^0_{\min |I_0|},\cdots,x^0_{\max |I_0|},x^1_{\min |I_1|},\cdots,x^1_{\max |I_1|}),(x^2_{i_2})_{i_2\in |I_2|})
\end{eqnarray*}
where $I^\prime_\bullet$ is the $(\Phi,2)$-sequence obtained by removing $I^\prime_0$.
\end{prop}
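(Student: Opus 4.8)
The plan is to deduce the stated equivalence directly from the Segal condition built into the definition of a $2$-fold complete Segal $\Phi$-operad, namely that the map $s_{s,I}$ is an equivalence. The content of the proposition is exactly operadic associativity, equivalently the tensor--hom adjunction for the internal $\mathrm{Map}$: a polycomposition out of $(x^0_{i_0})_{i_0\in|I_0|}$ into the object $\mathrm{Map}^{[I^\prime_\bullet]}_{\mathbf{X}^\otimes}((x^1_{i_1})_{i_1\in|I_1|},(x^2_{i_2})_{i_2\in|I_2|})$ should carry the same datum as a polycomposition out of the concatenated family $((x^0_{i_0}),(x^1_{i_1}))$ into $(x^2_{i_2})$. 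Since only the length-two sequence $I_\bullet = I_0\to I_1\to I_2$ is involved, I would isolate the single instance of $s_{s,I}$ attached to this sequence and extract the claim from it fiberwise.

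First I would apply the definition of a $2$-fold complete Segal $\Phi$-operad to $I_\bullet$: the map $s_{s,I}$ is an equivalence, giving
\[
\mathbf{X}[I_0 \xrightarrow{J^0_\bullet} I_1 \xrightarrow{J^1_\bullet} I_2] \xrightarrow{\sim} \mathbf{X}[I_0 \xrightarrow{J^0_\bullet} I_1] \times_{\mathbf{X}[I_1]} \mathbf{X}[I_1 \xrightarrow{J^1_\bullet} I_2],
\]
where the inner Segal maps for $\overline{\mathbf{X}}$ simultaneously split the decorating $J$-sequences $J^0_\bullet$ and $J^1_\bullet$. Since $p$ is a left $2$-fibration, its fibers over vertices are $\infty$-categories and are computed compatibly on both sides; I would pass to the fiber over the boundary data $(x^0_{i_0})\in\mathbf{X}[I_0]$ and $(x^2_{i_2})\in\mathbf{X}[I_2]$, letting the intermediate family $(x^1_{i_1})\in\mathbf{X}[I_1]$ vary. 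On the right the fiber product over $\mathbf{X}[I_1]$ reorganizes into the composite: a point is a choice of $(x^1_{i_1})$ together with a morphism in $\mathrm{Map}^{[I_0\to I_1]}_{\mathbf{X}^\otimes}$ and one in $\mathrm{Map}^{[I_1\to I_2]}_{\mathbf{X}^\otimes}$. I would then identify this fiber product with the flattened mapping object $\mathrm{Map}^{[I_\bullet]}_{\mathbf{X}^\otimes}((x^0_{i_0}),(x^1_{i_1}),(x^2_{i_2}))$, in which $(x^1_{i_1})$ is absorbed into the source, and identify the nested left-hand expression with the internal $\mathrm{Map}$ via the representability built into the endomorphism-object construction.

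The main obstacle I anticipate is bookkeeping rather than anything conceptual: matching the concatenation of sources $(x^0_{\min|I_0|},\dots,x^0_{\max|I_0|},x^1_{\min|I_1|},\dots,x^1_{\max|I_1|})$ on the flattened side with the merged $\Phi$-sequence produced by the fiber product, and checking that the induced identification of the $(\Phi,2)$-decorations $J^0_\bullet,J^1_\bullet$ is the correct one; here the convention $J^0_\bullet=\langle 1\rangle$ fixed in the preceding remark is precisely what makes the two sides strictly comparable. A second, genuinely $(\infty,2)$-categorical point is that $\mathrm{Map}^{[I_\bullet]}_{\mathbf{X}^\otimes}$ is the \emph{maximal $\infty$-category in the fiber}, not the fiber itself, so I would verify that passing to maximal $\infty$-categories commutes with the fiber product above; this holds because $s_{s,I}$ is an equivalence of scaled simplicial sets, hence restricts to an equivalence on maximal $\infty$-categories and is compatible with forming fibers over the thin boundary vertices. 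Granting this compatibility, the two displayed mapping objects are identified, which proves the equivalence; and since the right-hand side does not reference the intermediate family as an independent choice, this simultaneously exhibits the asserted uniqueness of the tensor product.
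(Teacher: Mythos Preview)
Your argument is sound in spirit but takes a different route from the paper. The paper does not invoke the Segal map $s_{s,I}$ at all; instead it writes down the commutative square
\[
\xymatrix{\mathbf{X}[I_\bullet]\ar[r]\ar@{=}[d] & \mathbf{X}[I_0]\times \mathbf{X}[I^\prime_\bullet]\ar[d]\\
\mathbf{X}[I_\bullet]\ar[r] & \mathbf{X}[I_0]\times \mathbf{X}[I_1]\times \mathbf{X}[I_2]}
\]
and reads the equivalence directly off the fibers: the bottom row defines the concatenated $\mathrm{Map}^{[I_\bullet]}$, while the fiber of the right vertical map over $((x^1_{i_1}),(x^2_{i_2}))$ is by definition $\mathrm{Map}^{[I'_\bullet]}_{\mathbf{X}^\otimes}((x^1_{i_1}),(x^2_{i_2}))$, so the fiber of the top row over $((x^0_{i_0}),\text{that point})$ gives the nested side. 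The uniqueness for longer $I_\bullet$ is then an iteration of the same square. Your approach instead decomposes $\mathbf{X}[I_\bullet]$ as $\mathbf{X}[I_0\to I_1]\times_{\mathbf{X}[I_1]}\mathbf{X}[I_1\to I_2]$ via the Segal condition and reassembles the fibers; this is a legitimate alternative, and it has the virtue of making explicit where the operadic axioms enter, but it is heavier than necessary since the paper's square is available before any Segal condition is imposed. One small caution: in your version you describe letting $(x^1_{i_1})$ vary and then being ``absorbed into the source,'' but on both sides of the displayed equivalence the family $(x^1_{i_1})$ is fixed, so the passage to fibers should fix it from the outset rather than integrate over it; the paper's square makes this automatic.
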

\begin{proof}
The first claim follows from diagram-chasing in the following commutative diagram:
\begin{equation*}
\xymatrix{\mathbf{X}[I_\bullet]\ar[r]\ar@{=}[d] & \mathbf{X}[I_0]\times \mathbf{X}[I^\prime_\bullet]\ar[d]\\
\mathbf{X}[I_\bullet]\ar[r] & \mathbf{X}[I_0]\times \mathbf{X}[I_1]\times \mathbf{X}[I_2]}
\end{equation*}
Note the dependence on the $J^\ast_\bullet$; if they differed from point-to-point, then the diagram would not even commute. To see that this uniquely specifies the tensor product, it suffices to note that it does so in the case $I_\bullet = I_0\xrightarrow{J^0_\bullet} I_1\xrightarrow{J^1_\bullet} I_2$, because in the general case, we get a commutative diagram:
\begin{equation*}
\xymatrix{\mathbf{X}[I_\bullet]\ar[r]\ar@{=}[d] & \mathbf{X}[I_0]\times \mathbf{X}[I^\prime_\bullet]\ar[r] & \cdots\ar[r] & \mathbf{X}[I_0]\times\cdots\times \mathbf{X}[I_{n-2}]\times \mathbf{X}[I^{\prime\cdots\prime}_\bullet]\ar[d]\\
\mathbf{X}[I_\bullet]\ar[rrr] & & & \mathbf{X}[I_0]\times \mathbf{X}[I_1]\times \cdots\times \mathbf{X}[I_n]}
\end{equation*}
\end{proof}
\begin{example}\label{modulesoverordcats}
Fix some $J^0_\bullet$, not necessarily $\langle 1\rangle$. By Example \ref{tensorexample}, if $\cc$ is a symmetric monoidal stable $\infty$-category, any $\cc$-module is a $\cc$-module in the sense of Definition \ref{modules}.
\end{example}
\begin{definition}
Again, fix some $J^0_\bullet$, not necessarily $\langle 1\rangle$. An ideal of a $2$-ring $\cc$ is a submodule of $\cc$, when viewed as a module over itself.
\end{definition}
\begin{example}
It follows from Example \ref{modulesoverordcats} that if $J^0_\bullet = \langle 1\rangle$, every thick subcategory of $\mathbf{Sp}$ is an ideal of $\mathbf{Sp}^\sharp$, and vice-versa.
\end{example}

\end{document}